
\documentclass[article,12pt]{amsart}

\usepackage{amsfonts}
\usepackage{amsmath}
\usepackage{mathrsfs}
\usepackage{graphicx}
\usepackage{color}
\usepackage{epsfig}
\usepackage{yfonts}
\usepackage{fancybox}
\usepackage{enumerate}
\usepackage{dsfont}
\usepackage{etoolbox}
\apptocmd{\thebibliography}{}{}{}

\setcounter{MaxMatrixCols}{10}

\setlength{\oddsidemargin}{0.1cm}   
\setlength{\evensidemargin}{-0.1cm}  
\setlength{\topmargin}{-2.2cm}  
\setlength{\textwidth}{15.cm} 
\setlength{\textheight}{22.5cm}
 
\numberwithin{equation}{section}
\theoremstyle{plain}
\newtheorem{thm}{Theorem}[section]
\newtheorem{rem}{Remark}[section]

\newtheorem{lem}{Lemma}[section]
\newtheorem{deff}{Definition}[section]

\newcommand{\dE}{\mathbb{E}}
\newcommand{\dR}{\mathbb{R}}
\newcommand{\dL}{\mathbb{L}}

\newcommand{\dP}{\mathbb{P}}
\newcommand{\dZ}{\mathbb{Z}}
\newcommand{\dC}{\mathbb{C}}

\newcommand{\cL}{\mathcal{L}}
\newcommand{\cN}{\mathcal{N}}

\newcommand{\rI}{\mathrm{I}}
\newcommand{\cF}{\mathcal{F}}

\newcommand{\cM}{\mathcal{M}}
\newcommand{\veps}{\varepsilon}

\newcommand{\ind}{\mbox{1}\kern-.25em \mbox{I}}
\font\calcal=cmsy10 scaled\magstep1
\def\build#1_#2^#3{\mathrel{\mathop{\kern 0pt#1}\limits_{#2}^{#3}}}
\def\liml{\build{\longrightarrow}_{}^{{\mbox{\calcal L}}}}

\def\videbox{\mathbin{\vbox{\hrule\hbox{\vrule height1.4ex \kern.6em\vrule height1.4ex}\hrule}}}
\def\demend{\hfill $\videbox$\\}
\email{bernard.bercu@math.u-bordeaux.fr}

\keywords{Elephant random walk; Martingales; Strong law of large numbers;
Asymptotic normality}
\subjclass[2010]{Primary:  60G50; Secondary: 60G42; 60F05}


\begin{document}
\title[On the elephant random walk with stops]
{On the multidimensional elephant random walk with stops \vspace{1ex}}
\author{Bernard Bercu}
\dedicatory{\normalsize University of Bordeaux, France}
\address{Universit\'e de Bordeaux, Institut de Math\'ematiques de Bordeaux,
UMR CNRS 5251, 351 Cours de la Lib\'eration, 33405 Talence cedex, France.}
\thanks{}

\begin{abstract}
The goal of this paper is to investigate the asymptotic behavior of 
the multidimensional elephant random walk with stops (MERWS). 
In contrast with the standard elephant random walk, the elephant is allowed to stay on his own position. We prove that the Gram matrix associated with the MERWS, properly
normalized, converges almost surely to the product of a deterministic matrix, related to the axes 
on which the MERWS moves uniformly, and a Mittag-Leffler distribution.
It allows us to extend all the results previously established for the one-dimensional elephant random walk with stops. More precisely, in the diffusive and critical regimes, we prove the almost sure convergence
of the MERWS. In the superdiffusive regime, we establish the almost sure convergence of
the MERWS, properly normalized, to a nondegenerate random vector. We also study the self-normalized asymptotic normality of the MERWS.
\end{abstract}
\maketitle


\ \vspace{-5ex}
\section{Introduction}
\label{S-I}

Over the last decade, the elephant random walk (ERW) has attracted a growing attention 
in mathematics and statistical physics \cite{Baur2016, Bercu2018, Bertoin2021,
Bertoin2022, Dedecker2023, Fan2021, Hu2025, Kiss2022, Kubota2019, Schutz2004}. 
While a wide range of litterature is now available on the
multidimensional elephant random walk \cite{BercuLaulin2019, BercuLaulin2021, Bertenghi2022, Chen2023, Gonzalez2020},
no result can be found on the multidimensional elephant random walk with stops (MERWS), which
is the natural extension to higher dimension of the one-dimensional ERW with stops.
The goal of this paper is to fill the gap by extending the recent results in \cite{Bercu2022} 
to the multidimensional setting. One can clearly see below that this extension is far from being simple as it involves the product of a deterministic matrix and the Mittag-Leffler distribution. 
We refer the reader to the recent contribution \cite{Roy2025} on the ERW with stops in a triangular array setting
where the Mittag-Leffler distribution also plays a crucial role.
\vspace{1ex} \\
\noindent
For a given dimension $d \geq 1$, let $(S_n)$ be a random walk on $\dZ^d$, 
starting at the origin at time zero, $S_0=0$.
For the first step, the elephant moves in one of the $2d$ directions of on $\dZ^d$ with the same probability $1/2d$. The next steps are performed as follows. Choose at random an integer $k$
among the previous times $\{1,\ldots,n\}$. Then, the elephant moves exactly in the same direction as at time $k$ with probability $p$, 
or to one of the $2d-1$ remaining directions with the same probability $q$, or it stays to his own position with probability $r$. In other words, for all $n \geq 1$,
\begin{equation}
\label{INCMERW}
X_{n+1}=A_{n+1} X_k
\end{equation}
with
\begin{equation}
\label{DEFMATRIXA}
   A_{n+1} = \left \{ \begin{array}{ccc}
    +I_d &\text{ with probability } & p \vspace{1ex}\\
    -I_d &\text{ with probability } & q \vspace{1ex}\\
     +J_d &\text{ with probability } & q \vspace{1ex}\\
    -J_d &\text{ with probability } & q \vspace{1ex}\\
     & \vdots & \vspace{1ex}\\
      +J_d^{d-1} &\text{ with probability } & q \vspace{1ex}\\
    -J_d^{d-1}  &\text{ with probability } & q \vspace{1ex}\\
    0  &\text{ with probability } & r 
   \end{array}  \right.
   \vspace{2ex}
\end{equation}
where $I_d$ and $J_d$ are the square matrices of order $d$ defined by
\begin{equation}
\label{DEFIJ}
I_d =
\begin{pmatrix}
   1 & 0 & \cdots &  \cdots & 0 \\
   0 & 1 & 0 & \cdots & 0 \\
    \vdots & \ddots & \ddots & \ddots & \vdots \\
  0& \cdots & 0& 1 & 0 \\
   0 & \cdots & \cdots & 0 & 1 
\end{pmatrix}
\hspace{1cm}
\text{and}
\hspace{1cm}
J_d = 
\begin{pmatrix}
   0 & 1 &  0 & \cdots & 0 \\
   0 & 0 & 1  & \ddots & \vdots \\
   \vdots & \ddots &  \ddots & \ddots & 0 \\
   0 & \cdots & 0 & 0 & 1 \\
   1 & 0 & \cdots& 0 & 0 
\end{pmatrix}
\end{equation}
and where
$$
p+(2d-1)q+r=1.
$$
Therefore, the position of the MERWS at time $n+1$ is given by
\begin{equation} 
\label{POSMERWS}
S_{n+1} = S_n + X_{n+1}. 
\end{equation}
In all the sequel, we assume that $0 < r < 1$ inasmuch as the
case $r = 0$ was previously investigated by Bercu and Laulin \cite{BercuLaulin2019}, while in the case $r=1$, the elephant remains stuck at the origin after the first step. 
It follows from our definition of the MERWS that for any $n \geq 1$,
\begin{equation}
\label{STEPSMERWS}
   X_{n+1} = A_{n+1} X_{U_{n+1}}
\end{equation}
where $U_{n+1}$ stands for a random variable uniformly distributed on $\{1, \ldots,n\}$. Moreover,
$A_n$ and $U_{n+1}$ are conditionnaly independent given $\cF_n$ where $\cF_n=\sigma(X_1,\ldots,X_n)$.
Consequently, we obtain from \eqref{STEPSMERWS} and the law of total probability that for all $n \geq 1$,
\begin{align*}
   \dE[X_{n+1}| \cF_n] &= \dE[A_{n+1} X_{U_{n+1}}| \cF_n] = \sum_{k=1}^n  \dE[A_{n+1} X_k\rI_{\{U_{n+1}=k\}}| \cF_n] \hspace{1cm} \text{a.s.} \\
   &= \sum_{k=1}^n  \dE[A_{n+1}] \dP(U_{n+1}=k) X_k=\frac{1}{n}  \sum_{k=1}^n  \dE[A_{n+1}] X_k 
   \hspace{1cm} \text{a.s.}
\end{align*}
which implies via \eqref{DEFMATRIXA} that
\begin{equation}
\label{CEMERWS}
   \dE[X_{n+1}| \cF_n] = \frac{a}{n}S_n
   \hspace{1cm} \text{a.s.}
\end{equation}
where the fundamental parameter $a$ of the MERWS is given by
\begin{equation}
\label{DEFA}
a=p-q=\frac{2dp+r-1}{2d-1}.
\end{equation}
Hence, we obtain from \eqref{POSMERWS} and \eqref{CEMERWS} that almost surely
\begin{equation}
\label{CEPOSMERWS}
   \dE[S_{n+1}| \cF_n] = \alpha_nS_n
   \hspace{1cm} \text{where} \hspace{1cm} \alpha_n=1+\frac{a}{n}.
\end{equation}
We shall see in Section \ref{S-MA} below that the critical value associated with the memory parameter $p$ of MERWS is given by
\begin{equation}
\label{DEFPDR}
p_{d,r}=\left(\frac{2d+1}{4d}\right)(1-r).
\end{equation}
One can observe that the standard ERW without stops \cite{Baur2016, Bercu2018, Coletti2017, Kubota2019, Schutz2004} corresponds to $d=1$ and $r=0$, which implies that $p_{d,r}$ reduces to the well-known critical value $3/4$.
The MERWS is said to be diffusive if $p<p_{d,r}$, critical if $p=p_{d,r}$ and
superdiffusive if $p>p_{d,r}$. A crucial point in our analysis is the study of the asymptotic behavior of
the Gram matrix $\Sigma_n$ associated with the MERWS, defined by
\begin{equation} 
\label{GRAMMERWS}
\Sigma_{n} = \sum_{k=1}^n X_k X_k^T. 
\end{equation}
We shall improve Lemma 2.1 in \cite{Bercu2022} as well as Theorem 3.1 in \cite{Gutb2021} 
by showing that, whatever the values of the $p, q \in [0,1]$ and $r \in ]0,1[$
\begin{equation} 
\label{LIMGRAMMERWS}
\lim_{n \rightarrow \infty} \frac{1}{n^{1-r}}\Sigma_{n} = \frac{1}{d}\Sigma I_d  \hspace{1cm} \text{a.s.}
\end{equation}
where 
$\Sigma$ stands for a Mittag-Leffler distribution with parameter $1-r$. The matrix $d^{-1}I_d$ is related to the axes 
of $\dZ^d$ on which the MERWS moves uniformly.
By taking the trace on both sides of \eqref{LIMGRAMMERWS}, if we denote $\sigma_n^2=\text{Tr}(\Sigma_n)$, we clearly obtain that
\begin{equation} 
\label{LIMTRGRAMMERWS}
\lim_{n \rightarrow \infty} \frac{\sigma_n^2}{n^{1-r}}  = \Sigma   \hspace{1cm} \text{a.s.}
\end{equation}
The almost sure convergences \eqref{LIMGRAMMERWS} and \eqref{LIMTRGRAMMERWS} will allow us to carry out
a sharp analysis of the asymptotic behavior of the MERWS. 
The paper is organized as follows. Section \ref{S-ML} deals with the asymptotic behavior of the Gram matrix $\Sigma_n$.
Section \ref{S-MR} is devoted to the main results of the paper. We establish the almost sure asymptotic behavior of the
MERWS in the diffusive, critical and superdiffusive regimes. Moreover, we also prove the asymptotic normality of the MERWS, suitably normalized by
$\sigma_n^2$, in the diffusive and critical regimes. Finally, the fluctuation of the MERWS around its limiting random variable is also provided in the superdiffusive regime. 
Our multidimensional martingale approach is described  in Section \ref{S-MA}, while all technical proofs are 
postponed to Appendices A to C.

\section{On the asymptotic behavior of the Gram matrix}
\label{S-ML}

The Mittag-Leffler function was introduced in 1903 by the Swedish mathematician G\"osta Mittag-Leffler. It has a rich and long history in complex analysis and probability.
It is defined, for all $z \in \dC$, by
$$
E_\alpha(z) = \sum_{n=0}^\infty \frac{z^n}{\Gamma(1+n\alpha )}
$$
where $\alpha$ is a positive real parameter and $\Gamma$ stands for the Euler Gamma function. We refer the reader to \cite{Gorenflo2020} for a monograph devoted to the main properties of Mittag-Leffler functions.

\begin{deff}
\label{DEFML}
We shall say that a positive random variable $X$ has a Mittag-Leffler distribution with parameter $\alpha \in [0,1]$, denoted by $\cM\cL(\alpha)$, if its Laplace transform is given, for all $t\in \dR$, by
\begin{equation}
\label{LAPML}
\dE[\exp(tX)]=E_\alpha(t) = \sum_{n=0}^\infty \frac{t^n}{\Gamma(1+n\alpha )}.
\end{equation}
\end{deff}
\noindent
If $X$ has a $\cM\cL(\alpha)$ distribution with parameter $0<\alpha<1$, its probability density function $f_\alpha$ is given \cite{Feller1971}, for all $x>0$, by
\begin{equation}
\label{PDFML}
f_\alpha(x)=\frac{1}{\pi \alpha} \sum_{n=0}^\infty \Gamma(1+\alpha n)\sin(\alpha n \pi) \frac{(-x)^{n-1}}{n!}.
\end{equation}
As a special case, we have for all $x>0$
$$
f_{1/2}(x)=\frac{1}{\sqrt{\pi}} \exp\left(\! -\frac{x^2}{4}\right).
$$ 
It means that the $\cM\cL(\alpha)$ distribution with parameter $\alpha=1/2$ coincides with the distribution of 
$|Z|$ where $Z$ has a Gaussian $\cN(0,2)$ distribution. The Mittag-Leffler distribution satisfies the celebrated Carleman's condition which means that it is characterized by its moments. They are given, for any integer $m \geq 1$, by
\begin{equation}
\label{DEFMLMOM}
\dE[X^m]=\frac{m!}{\Gamma(1+m \alpha )}.
\end{equation}
\noindent
Our first result on the Gram matrix associated with the MERWS is as follows.

\begin{lem}
\label{L-ML}
Whatever the values of $p$, $q$ in $[0,1]$ and $r$ in $]0,1[$, we have 
\begin{equation}
\label{ASCVGML}
\lim_{n \rightarrow \infty} \frac{1}{n^{1-r}}\Sigma_n = \frac{1}{d}\Sigma I_d
\hspace{1cm}\text{a.s.}
\end{equation}
where  $\Sigma$ stands for the $\cM\cL(1-r)$ distribution. In particular,
\begin{equation}
\label{ASCVGMLTR}
\lim_{n \rightarrow \infty} \frac{\sigma_n^2}{n^{1-r}}= \Sigma 
\hspace{1cm}\text{a.s.}
\end{equation}
Moreover, this convergence holds in $\dL^m$ for any integer $m\geq 1$,
\begin{equation}
\label{MLMOM}
\lim_{n \rightarrow \infty} \dE \left[ \left| \frac{\sigma_n^2}{n^{1-r}} - \Sigma \right|^m \right]=0.
\end{equation}
\end{lem}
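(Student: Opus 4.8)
The plan is to reduce the matrix statement to a one-dimensional counting problem and then identify the limiting law by the method of moments. First I would observe that every increment $X_k$ is either the zero vector or a signed element $\pm e_i$ of the canonical basis: this holds for $X_1$ and propagates by induction, since each admissible matrix $A_{n+1}\in\{0,\pm I_d,\pm J_d,\ldots,\pm J_d^{d-1}\}$ maps a signed basis vector to a signed basis vector or to $0$. Consequently $X_kX_k^T$ is either $0$ or a diagonal matrix $e_ie_i^T$, so $\Sigma_n$ is \emph{exactly} diagonal, with $l$-th diagonal entry $N_n^{(l)}=\#\{k\le n : X_k=\pm e_l\}$ and $\sigma_n^2=\mathrm{Tr}(\Sigma_n)=\sum_{l=1}^d N_n^{(l)}$ counting the non-stopping steps.

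Next I would handle the trace. Writing $b=1-r$ and using that the nonzero matrices $A_{n+1}$ are orthogonal together with the conditional independence of $A_{n+1}$ and $U_{n+1}$ given $\cF_n$, one gets $\dE[\|X_{n+1}\|^2\mid\cF_n]=(1-r)n^{-1}\sigma_n^2$, hence the fundamental recursion $\dE[\sigma_{n+1}^2\mid\cF_n]=\sigma_n^2(1+b/n)$. Setting $\gamma_n=\prod_{k=1}^{n-1}(1+b/k)=\Gamma(n+b)/(\Gamma(1+b)\Gamma(n))\sim n^b/\Gamma(1+b)$, the normalized sequence $M_n=\sigma_n^2/\gamma_n$ is a nonnegative martingale, so it converges almost surely, which yields \eqref{ASCVGMLTR} with $\Sigma:=\lim_n \sigma_n^2/n^{1-r}\ge 0$.

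The heart of the argument is to identify the law of $\Sigma$. Here the key device is to work with the rising factorial $P_m(x)=x(x+1)\cdots(x+m-1)$ rather than ordinary powers: since the increments $\|X_{n+1}\|^2$ are $\{0,1\}$-valued, the elementary identity $P_m(x+1)-P_m(x)=m\,P_{m-1}(x+1)$ together with $x\,P_{m-1}(x+1)=P_m(x)$ gives $\dE[P_m(\sigma_{n+1}^2)\mid\cF_n]=P_m(\sigma_n^2)(1+mb/n)$. Therefore $\dE[P_m(\sigma_n^2)]=m!\,\Gamma(n+mb)/(\Gamma(1+mb)\Gamma(n))$, using $P_m(\sigma_1^2)=m!$, whence $\dE[(\sigma_n^2/n^{1-r})^m]\to m!/\Gamma(1+m(1-r))$ for every $m\ge 1$. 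These are precisely the $\cM\cL(1-r)$ moments \eqref{DEFMLMOM}, so Carleman's condition identifies $\Sigma$ as $\cM\cL(1-r)$; moreover convergence of all moments forces $\{(\sigma_n^2/n^{1-r})^m\}_n$ to be uniformly integrable, which upgrades the almost sure convergence to the $\dL^m$ convergence \eqref{MLMOM}.

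It remains to promote the scalar statement \eqref{ASCVGMLTR} to the matrix statement \eqref{ASCVGML}, i.e. to show the axes are visited asymptotically uniformly. For this I would track the deviation $D_n^{(l)}=N_n^{(l)}-\sigma_n^2/d$. A direct computation of $\dP(X_{n+1}=\pm e_l\mid\cF_n)$ gives $\dE[N_{n+1}^{(l)}\mid\cF_n]=N_n^{(l)}(1+a/n)+2q\,\sigma_n^2/n$ with $a=p-q$, and, crucially, the constant $2q$ is exactly tuned so that $\dE[D_{n+1}^{(l)}\mid\cF_n]=D_n^{(l)}(1+a/n)$. Since $b-a=2dq\ge 0$, the deviation obeys a martingale recursion with the strictly smaller exponent $a<b$ (when $q>0$), and a second-moment estimate of the form $\dE[(D_n^{(l)})^2]=O(n^{\max(2a,b)})=o(n^{2b})$, combined with the martingale $D_n^{(l)}/\prod_{k<n}(1+a/k)$, yields $D_n^{(l)}=o(n^{1-r})$ almost surely. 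Hence $N_n^{(l)}/n^{1-r}\to\Sigma/d$ for each $l$, giving \eqref{ASCVGML}. The main obstacle I anticipate is this last equidistribution step: controlling the deviation martingale in the regime where $b$ may be smaller than $1/2$ requires the sharp exponent bookkeeping above rather than a crude bracket estimate, and it is exactly where the axis-mixing ($q>0$ when $d\ge 2$) enters.
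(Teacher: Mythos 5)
Your proposal is correct and follows the same overall architecture as the paper: diagonality of $\Sigma_n$, the martingale $b_n\sigma_n^2$ for the trace, moment identification of the $\cM\cL(1-r)$ limit via Carleman, and a per-axis martingale whose quadratic variation is controlled by $w_n=\sum_k a_k^2\sigma_k^2/k$ in the three regimes. Two points differ in execution. First, where the paper simply imports Lemma 4.1 of \cite{Bercu2022} for the moments of $N=\lim b_n\sigma_n^2$, you rederive it via the rising-factorial identity $\dE[P_m(\sigma_{n+1}^2)\,|\,\cF_n]=(1+mb/n)P_m(\sigma_n^2)$; this is the content of that cited lemma, so your argument is self-contained but not new. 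Second, and more substantively, for the equidistribution step the paper solves the inhomogeneous recursion explicitly, $\sigma_n^2(i)=a_n^{-1}\bigl(X_1^2(i)+2q\sum_{k<n}a_{k+1}\sigma_k^2/k+N_n(i)\bigr)$, and extracts the limit $\Sigma/d$ from the forcing term by a second Toeplitz computation (equation \eqref{TOPNI}) using $b-a=2dq$; you instead center at the outset, observing that $D_n^{(l)}=\sigma_n^2(l)-\sigma_n^2/d$ satisfies the exact homogeneous recursion $\dE[D_{n+1}^{(l)}\,|\,\cF_n]=(1+a/n)D_n^{(l)}$ (the same cancellation $b-a=2dq$, used once), so that $a_nD_n^{(l)}$ is a martingale and only negligibility, not a limit value, has to be proved. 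This is a genuinely cleaner bookkeeping of the same estimate and avoids one Toeplitz application. Three small caveats: your bound $\dE[(D_n^{(l)})^2]=O(n^{\max(2a,b)})$ acquires a $\log n$ factor in the critical regime $2a=b$ (harmless, since you only need $o(n^{2b})$); a second-moment bound alone does not yield an almost sure rate, so you should say explicitly that you invoke the strong law for martingales applied to $a_nD_n^{(l)}$ with $\langle a_\cdot D_\cdot^{(l)}\rangle_n=O(w_n)$, exactly as the paper does for its $N_n(i)$; and your final step $D_n^{(l)}=O(n^a)=o(n^b)$ in the superdiffusive regime requires $a<b$, i.e.\ $q>0$ -- the same implicit restriction present in the paper's use of \eqref{TOPNI}, which you rightly flag, and which indeed cannot be removed when $d\geq 2$ since for $q=0$ the walk never leaves the axis of its first step.
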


\begin{proof}
The proof is given in Section \ref{S-MA}. 
\end{proof}



\ \vspace{-2ex}
\section{Main results}
\label{S-MR}


\subsection{The diffusive regime}

First of all, we focus our attention on the asymptotic behavior of the MERWS
in the diffusive regime where $p<p_{d,r}$. The almost sure convergence of the position of
the MERWS is as follows.

\begin{thm}
\label{T-ASCVG-DR}
We have the almost sure convergence
\begin{equation}
\label{ASCVGDR}
\lim_{n \rightarrow \infty} \frac{1}{n}S_n = 0 \hspace{1cm}\text{a.s.}
\end{equation}
More precisely,
\begin{equation}
\label{ASCVGDRSHARP}
\frac{\|S_n\|^2}{n^2}=O \left( \frac{\log n}{n^{1+r}} \right)
\hspace{1cm} \text{a.s.}
\end{equation}
\end{thm}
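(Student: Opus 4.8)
The plan is to study the $\dR^d$-valued martingale $M_n = a_n^{-1}S_n$, where the deterministic normalizing sequence is defined by
\begin{equation*}
a_n = \prod_{k=1}^{n-1}\alpha_k = \prod_{k=1}^{n-1}\Bigl(1+\frac{a}{k}\Bigr), \qquad a_{n+1}=\alpha_n a_n.
\end{equation*}
The recursion \eqref{CEPOSMERWS} gives at once $\dE[M_{n+1}\mid\cF_n]=M_n$ a.s., so $(M_n)$ is a martingale, and a standard ratio computation with the Gamma function yields $a_n \sim n^a/\Gamma(a+1)$. Since $a>-1$ for all admissible parameters, $\Gamma(a+1)$ is finite and positive. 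Because $S_n=a_n M_n$, everything reduces to controlling $\|M_n\|$ through its predictable quadratic variation.

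First I would compute the increment $\Delta M_{n+1}=M_{n+1}-M_n = a_{n+1}^{-1}(X_{n+1}-\dE[X_{n+1}\mid\cF_n])$, whose conditional covariance is $a_{n+1}^{-2}\,\mathrm{Cov}(X_{n+1}\mid\cF_n)$. Using \eqref{DEFMATRIXA} and the orthogonality of the signed cyclic matrices $\pm J_d^{\,j}$, one gets
\begin{equation*}
\dE[X_{n+1}X_{n+1}^T\mid\cF_n] = \frac{1}{n}\Bigl((p+q)\Sigma_n + 2q\sum_{j=1}^{d-1} J_d^{\,j}\Sigma_n (J_d^{\,j})^T\Bigr) \qquad \text{a.s.}
\end{equation*}
Taking the trace, invariant under conjugation by the orthogonal $J_d^{\,j}$, and using $p+(2d-1)q=1-r$, yields the clean identity $\dE[\|X_{n+1}\|^2\mid\cF_n]=(1-r)\sigma_n^2/n$ a.s. Hence the trace $V_n=\text{Tr}(\langle M\rangle_n)$ of the quadratic variation satisfies
\begin{equation*}
V_n = \sum_{k=1}^{n-1}\frac{1}{a_{k+1}^2}\Bigl(\frac{1-r}{k}\sigma_k^2 - \frac{a^2}{k^2}\|S_k\|^2\Bigr).
\end{equation*}

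Next I would insert the asymptotics. By \eqref{ASCVGMLTR}, $\sigma_k^2\sim k^{1-r}\Sigma$, and with $a_{k+1}^2\sim k^{2a}/\Gamma(a+1)^2$ the general term behaves like $k^{-(2a+r)}$. The defining feature of the diffusive regime is $a<(1-r)/2$, equivalently $2a+r<1$ (the critical value $a=(1-r)/2$ being exactly where $2a+r=1$), so the series diverges and, since $\Sigma>0$ a.s., $V_n\to\infty$ with
\begin{equation*}
V_n \sim \frac{(1-r)\Gamma(a+1)^2}{1-2a-r}\,\Sigma\, n^{1-2a-r} \qquad \text{a.s.}
\end{equation*}
Applying the strong law of large numbers for martingales coordinatewise (each coordinate bracket being $\sim V_n/d\to\infty$ since the conditional covariance is asymptotically a scalar multiple of $I_d$) gives $\|M_n\|^2 = O(V_n\log V_n)=O(n^{1-2a-r}\log n)$ a.s. Multiplying back by $a_n^2\sim n^{2a}$ produces $\|S_n\|^2=O(n^{1-r}\log n)$, which is exactly \eqref{ASCVGDRSHARP}; as $r>0$ the right-hand side of \eqref{ASCVGDRSHARP} tends to $0$, giving \eqref{ASCVGDR}.

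I expect the main obstacle to be the quadratic-variation bookkeeping, i.e. turning the matrix identity for $\dE[X_{n+1}X_{n+1}^T\mid\cF_n]$ into a usable scalar rate; the orthogonal-conjugation trick collapsing the $J_d^{\,j}$ sum under the trace is precisely what makes the constant $1-r$ appear. It is also essential that the correction term $a^2 n^{-2}\|S_n\|^2$ be negligible: bounding $\|S_k\|\le\sigma_k^2$ (each step being a unit vector or zero, so $\|X_j\|=\|X_j\|^2$) shows this term contributes only at order $n^{1-2a-2r}$, strictly smaller than $n^{1-2a-r}$ because $r>0$, and is therefore absorbed. A secondary delicate point is the exact logarithmic factor: the bound $\|S_n\|^2=O(n^{1-r}\log n)$ relies on the sharp form $\|M_n\|^2=O(V_n\log V_n)$ of the martingale strong law rather than the cruder $o(V_n^{1+\delta})$.
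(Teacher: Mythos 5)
Your proposal is correct and follows essentially the same route as the paper: form the martingale $M_n$ by normalizing $S_n$ with the Gamma-ratio weights, bound the trace of its predictable quadratic variation by $w_n=\sum_k a_k^2\sigma_k^2/k$, use the almost sure convergence $\sigma_n^2/n^{1-r}\to\Sigma$ from Lemma \ref{L-ML} to get $\operatorname{Tr}(\langle M\rangle_n)=O(n^{b-2a})$, and conclude via the martingale strong law that $\|M_n\|^2=O(n^{b-2a}\log n)$, hence $\|S_n\|^2=O(n^{1-r}\log n)$. The only differences are cosmetic (you use the reciprocal convention for $a_n$, and you estimate the negative correction term $-a^2W_n$ explicitly where the paper simply discards it).
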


\noindent
Our second result is devoted to the law of the iterated logarithm for the MERWS.  
Denote by $v^2$ the asymptotic variance
\begin{equation}
\label{VARDR}
v^2=\frac{(2d-1)(1-r)}{d((2d+1)(1-r)-4dp)}.
\end{equation}

\begin{thm}
\label{T-LIL-DR}
We have the law of the iterated logarithm
\begin{equation}
 \limsup_{n \rightarrow \infty} \frac{\|S_n\|^2}{2 \sigma_n^2 \log \log \sigma_n^2}=  
d v^2 \hspace{1cm} \text{a.s.}
\label{LIL-DR1}
\end{equation}
Moreover, we also have 
\begin{equation}
\limsup_{n \rightarrow \infty} \frac{\|S_n\|^2}{2 n^{1-r} \log \log n}=  
d v^2 \, \Sigma \hspace{1cm} \text{a.s.}
\label{LIL-DR2}
\end{equation}
where $\Sigma$ has a $\cM\cL(1-r)$ distribution. 
\end{thm}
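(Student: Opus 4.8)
The plan is to read off the statement from the martingale already implicit in \eqref{CEPOSMERWS}. Setting $a_n=\prod_{k=1}^{n-1}(1+a/k)^{-1}$, one has $a_n=\Gamma(n)\Gamma(1+a)/\Gamma(n+a)\sim\Gamma(1+a)\,n^{-a}$, and $M_n=a_nS_n$ is an $\dR^d$-valued $(\cF_n)$-martingale whose increments $\Delta M_{n+1}=a_{n+1}\varepsilon_{n+1}$, with $\varepsilon_{n+1}=X_{n+1}-\dE[X_{n+1}\mid\cF_n]$, are bounded by $2a_{n+1}=O(n^{-a})$. Both \eqref{LIL-DR1} and \eqref{LIL-DR2} will be consequences of a law of the iterated logarithm for $M_n$, so the first task is to identify the predictable quadratic variation $\langle M\rangle_n=\sum_{k=1}^{n-1}a_{k+1}^2\,\dE[\varepsilon_{k+1}\varepsilon_{k+1}^{T}\mid\cF_k]$.

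The computation of the bracket is where the geometry of \eqref{DEFMATRIXA} enters. Since every $X_k$ is a signed canonical basis vector or $0$, the Gram matrix $\Sigma_k$ is diagonal; evaluating $\dE[A_{k+1}\Sigma_kA_{k+1}^{T}]$ from \eqref{DEFMATRIXA} and using the permutation identity $\sum_{j=0}^{d-1}J_d^{\,j}\Sigma_k(J_d^{\,j})^{T}=\sigma_k^2I_d$ gives the exact relation $\dE[X_{k+1}X_{k+1}^{T}\mid\cF_k]=\tfrac1k\bigl(a\,\Sigma_k+2q\,\sigma_k^2I_d\bigr)$. The drift correction $\tfrac{a^2}{k^2}S_kS_k^{T}$ is negligible thanks to the diffusive bound \eqref{ASCVGDRSHARP}, and Lemma \ref{L-ML} forces $\Sigma_k\sim\tfrac1d\sigma_k^2I_d$, so that $\dE[\varepsilon_{k+1}\varepsilon_{k+1}^{T}\mid\cF_k]\sim\tfrac{1-r}{d}\,\tfrac{\sigma_k^2}{k}\,I_d$. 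Summing against $a_{k+1}^2\sim\Gamma(1+a)^2k^{-2a}$ and inserting $\sigma_k^2\sim\Sigma\,k^{1-r}$ from \eqref{ASCVGMLTR}, with the diffusive condition $p<p_{d,r}$ guaranteeing $1-2a-r>0$, yields
\begin{equation*}
\langle M\rangle_n \sim \frac{(1-r)\,\Gamma(1+a)^2\,\Sigma}{d\,(1-2a-r)}\, n^{1-2a-r}\,I_d ,
\qquad
\operatorname{Tr}\langle M\rangle_n \sim d\,v^2\,a_n^2\,\sigma_n^2 .
\end{equation*}

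With the bracket understood, I would apply a law of the iterated logarithm for $d$-dimensional martingales with an almost surely regularly varying quadratic variation, of the kind developed for the multidimensional elephant random walk in \cite{BercuLaulin2019}. Two hypotheses have to be verified: the increment bound $\|\Delta M_n\|=O(n^{-a})=o\bigl(\sqrt{\langle M\rangle_n/\log\log\langle M\rangle_n}\bigr)$, which holds because the exponent reduces to $(r-1)/2<0$; and the almost sure convergence of the normalized bracket $\langle M\rangle_n/\operatorname{Tr}\langle M\rangle_n\to\tfrac1dI_d$, which is exactly Lemma \ref{L-ML}. Because the martingale is self-normalized by its own bracket, the random factor $\Sigma$ cancels and the resulting limit is a deterministic constant. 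Translating back through $\|S_n\|^2=a_n^{-2}\|M_n\|^2$ and $\operatorname{Tr}\langle M\rangle_n\sim d\,v^2a_n^2\sigma_n^2$, together with $\log\log\operatorname{Tr}\langle M\rangle_n\sim\log\log\sigma_n^2$, produces \eqref{LIL-DR1}; and \eqref{LIL-DR2} follows at once by substituting the almost sure equivalence $\sigma_n^2\sim\Sigma\,n^{1-r}$ of \eqref{ASCVGMLTR}, noting $\log\log\sigma_n^2\sim\log\log n$.

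I expect the bracket computation to be routine and the genuine difficulty to be the martingale LIL itself. The delicate points are that the normalization is by the random Mittag--Leffler variable $\Sigma$, so one must use a version of the law that divides by the observed bracket rather than by a deterministic sequence; that the upper and lower halves require control of the exceptional sets uniformly over the directions entering $\|M_n\|^2$; and, above all, that the constant must come out as $d\,v^2$, which is tied to normalizing by the trace $\operatorname{Tr}\langle M\rangle_n$ (equivalently, to aggregating the $d$ coordinate brackets) rather than by a single coordinate. Carrying out this last piece of bookkeeping consistently, and checking that the diffusive remainder terms discarded above are truly $o$ of the bracket, is where the real work lies.
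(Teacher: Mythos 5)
Your skeleton is the right one and coincides with the paper's: pass to the martingale $M_n=a_nS_n$, compute the predictable quadratic variation using \eqref{CVARMERWS} and Lemma \ref{L-ML}, apply a martingale law of the iterated logarithm, and translate back through $a_n\sim\Gamma(a+1)n^{-a}$ and $\sigma_n^2\sim\Sigma n^{1-r}$. Your bracket asymptotics are correct and agree with \eqref{CVGIPMNDR}--\eqref{CVGWNDR}. The problem is that the proof stops exactly where the theorem lives. The martingale LIL is invoked generically (``of the kind developed in \cite{BercuLaulin2019}'') rather than applied: no specific statement is identified, and the only hypothesis you verify is the Kolmogorov-type increment bound $\|\Delta M_n\|=O(n^{-a})=o(\sqrt{\langle M\rangle_n/\log\log\langle M\rangle_n})$. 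That condition belongs to Stout's LIL, not to the result the paper actually uses (Theorem 1 and Corollary 2 of \cite{Heyde1977}), whose hypotheses are of a different nature: one must check the summability \eqref{BSUMDR} of the normalized fourth moments of $\Delta M_n(u)$ and the almost sure convergence of the compensated martingale $P_n(u)=\sum_k a_k^2k^{-(b-2a)}\bigl(\varepsilon_k^2(u)-\dE[\varepsilon_k^2(u)\mid\cF_{k-1}]\bigr)$. These in turn rest on the genuinely quantitative estimate $\dE[\|\varepsilon_{n+1}\|^4\mid\cF_n]\le 7b\sigma_n^2/n$, hence $\dE[\|\varepsilon_{n+1}\|^4]\le 7/(\Gamma(b)n^{1-b})$ as in \eqref{UBMOM4EPS}; the crude bound $\|\varepsilon_{n+1}\|\le 2$ that you rely on gives only $\dE[|\Delta M_n(u)|^4]=O(n^{-4a})$, and then $\sum_n n^{-2(b-2a)}n^{-4a}=\sum_n n^{-2b}$ diverges whenever $r\ge 1/2$, so the sharper conditional moment computation is not optional on this route. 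None of this appears in the proposal.

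The second, and more serious, omission is the one you yourself flag as ``where the real work lies'': producing the constant. Heyde's theorem (or Stout's) gives, for each fixed $u$, the directional statement \eqref{PLIL-SNU-DR} with constant $v^2\|u\|^2$; passing from there to $\limsup\|S_n\|^2/(2\sigma_n^2\log\log\sigma_n^2)=dv^2$ requires aggregating the $d$ coordinate limsups via $\|S_n\|^2=\sum_{i=1}^d\langle e_i,S_n\rangle^2$, and this is precisely the step the paper carries out and your proposal defers. Announcing that the answer ``must come out as $dv^2$'' because one normalizes by the trace is not a derivation --- a priori the multidimensional cluster-set heuristic with isotropic bracket $v^2I_d$ would suggest $v^2$ rather than $dv^2$, so the aggregation argument is exactly the point at which the claimed constant has to be justified, not assumed. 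As it stands, the proposal is a correct plan with correct asymptotic bookkeeping, but the two load-bearing steps (verification of the hypotheses of the specific martingale LIL, and the identification of the constant for $\|S_n\|^2$ rather than for a single direction) are missing.
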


\begin{rem} The law of the iterated logarithm \eqref{LIL-DR2} clearly improves \eqref{ASCVGDRSHARP} as
\begin{equation*}
\frac{\|S_n\|^2}{n^2}=O \left( \frac{\log \log n}{n^{1+r}} \right)
\hspace{1cm} \text{a.s.}
\end{equation*}
\end{rem}

\noindent
Our next result deals with the asymptotic normality of the MERWS. As previously seen, it is necessary
to self-normalized the position $S_n$ by the trace $\sigma_n^2$ of the Gram matrix $\Sigma_n$ in order 
to establish the asymptotic normality. 

\begin{thm}
\label{T-AN-DR}
We have the asymptotic normality
\begin{equation}
\label{ANDR}
\frac{S_n}{\sqrt{\sigma_n^2}} \underset{n\rightarrow+\infty}{\overset{\cL}{\longrightarrow}} 
\cN\big(0,v^2 I_d\big).
\end{equation}
Moreover, we also have
\begin{equation}
\label{ANDRML}
\frac{S_n}{\sqrt{n^{1-r}}} \underset{n\rightarrow+\infty}{\overset{\cL}{\longrightarrow}} \sqrt{\Sigma^\prime}
\cN\big(0,v^2 I_d\big)
\end{equation}
where $\Sigma^\prime$ is independent of the Gaussian distribution at the right-hand side of \eqref{ANDRML}
and $\Sigma^\prime$ has a $\cM\cL(1-r)$ distribution.
\end{thm}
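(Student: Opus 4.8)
The plan is to follow the multidimensional martingale approach announced in the paper. First I would introduce the normalizing sequence $\gamma_n = \prod_{k=1}^{n-1}\alpha_k$, with $\gamma_1 = 1$ and $\alpha_n = 1 + a/n$ as in \eqref{CEPOSMERWS}. Classical asymptotics for such products give $\gamma_n \sim n^a/\Gamma(a+1)$. The relation \eqref{CEPOSMERWS} then shows that the $\dR^d$-valued process $M_n = \gamma_n^{-1}S_n$ is an $(\cF_n)$-martingale, whose increments satisfy
\[
\Delta M_{n+1} = M_{n+1} - M_n = \frac{1}{\gamma_{n+1}}\bigl(X_{n+1} - \dE[X_{n+1}\mid \cF_n]\bigr).
\]
Since $\|X_k\| \le 1$ for every $k$, these increments are uniformly bounded by $2/\gamma_{n+1}$, which makes the Lindeberg condition automatic.

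The heart of the argument is the asymptotic behavior of the predictable quadratic variation. I would first compute the conditional second moment of $X_{n+1} = A_{n+1}X_{U_{n+1}}$. Using the conditional independence of $A_{n+1}$ and $U_{n+1}$ together with \eqref{DEFMATRIXA}, one gets for any matrix $M$ that $\dE[A_{n+1}MA_{n+1}^T] = (p+q)M + 2q\sum_{j=1}^{d-1}J_d^j M (J_d^j)^T$, the sign of each $\pm J_d^j$ being irrelevant in the quadratic form. Averaging over $U_{n+1}$ yields
\[
\dE[X_{n+1}X_{n+1}^T\mid \cF_n] = \frac{1}{n}\Bigl((p+q)\Sigma_n + 2q\sum_{j=1}^{d-1}J_d^j \Sigma_n (J_d^j)^T\Bigr).
\]
Now I invoke Lemma \ref{L-ML}: since $n^{r-1}\Sigma_n \to d^{-1}\Sigma I_d$ almost surely and each $J_d^j$ is orthogonal, every term $J_d^j\Sigma_n(J_d^j)^T$ has the same isotropic limit, which is precisely where the factor $d^{-1}I_d$ comes from. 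Collecting coefficients gives $(p+q)+2q(d-1) = a + 2dq = 1-r$, using $p+(2d-1)q+r=1$ and \eqref{DEFA}, so that $\dE[X_{n+1}X_{n+1}^T\mid\cF_n]\sim (1-r)d^{-1}\Sigma I_d\, n^{-r}$ almost surely; the centering term $a^2 n^{-2} S_n S_n^T$ is negligible by Theorem \ref{T-ASCVG-DR}. Dividing by $\gamma_{n+1}^2 \sim \Gamma(a+1)^{-2}n^{2a}$ and summing (a Toeplitz-type lemma), I obtain
\[
\langle M\rangle_n \sim \frac{(1-r)\Gamma(a+1)^2}{d(1-2a-r)}\,\Sigma I_d\; n^{1-2a-r} \qquad \text{a.s.},
\]
where the exponent $1-2a-r$ is positive precisely in the diffusive regime, since $p=p_{d,r}$ is equivalent to $a = (1-r)/2$.

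Next I would apply the multidimensional martingale central limit theorem with a \emph{random} limiting variance. Normalizing by $w_n^2 = n^{1-2a-r}$, the previous step gives $w_n^{-2}\langle M\rangle_n \to c\,\Sigma I_d$ almost surely with $c = (1-r)\Gamma(a+1)^2/(d(1-2a-r))$, and the Lindeberg condition holds by boundedness of the increments. Because the limit is $\cF_\infty$-measurable and nondeterministic, the convergence is stable (mixing), giving $w_n^{-1}M_n \to \sqrt{c\,\Sigma}\,G$ in law, where $G\sim\cN(0,I_d)$ is independent of $\Sigma$. Writing $S_n = \gamma_n M_n$ and using $\gamma_n w_n \sim \Gamma(a+1)^{-1}n^{(1-r)/2}$, I recover
\[
\frac{S_n}{\sqrt{n^{1-r}}} \xrightarrow{\ \cL\ } \sqrt{\Sigma}\,\cN(0, v^2 I_d), \qquad v^2 = \frac{c}{\Gamma(a+1)^2} = \frac{1-r}{d(1-2a-r)},
\]
and rewriting $1-2a-r$ via \eqref{DEFA} shows $v^2$ equals \eqref{VARDR}. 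Since conditionally on $\cF_\infty$ the Gaussian factor is independent of $\Sigma$, this limit has the law $\sqrt{\Sigma'}\cN(0,v^2 I_d)$ of \eqref{ANDRML} with $\Sigma'$ an independent $\cM\cL(1-r)$ variable. Finally, the self-normalized statement \eqref{ANDR} follows from the same stable convergence: writing $S_n/\sqrt{\sigma_n^2} = (S_n/\sqrt{n^{1-r}})\big/\sqrt{\sigma_n^2/n^{1-r}}$, the numerator converges stably to $\sqrt{\Sigma}\,\cN(0,v^2 I_d)$ while the denominator converges almost surely to $\sqrt{\Sigma}$ by \eqref{ASCVGMLTR}. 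The crucial point, and the main obstacle, is that stable convergence guarantees joint convergence of the numerator with the $\cF_\infty$-measurable factor $\sqrt{\Sigma}$, so that the \emph{same} random variable cancels and leaves the genuinely Gaussian limit $\cN(0,v^2 I_d)$. The delicate steps are thus the stable (rather than merely distributional) martingale central limit theorem with a random limiting covariance, and the careful use of Lemma \ref{L-ML} both to identify $\langle M\rangle_n$ and to cancel $\sqrt{\Sigma}$ in the self-normalization.
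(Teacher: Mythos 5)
Your proposal is correct and follows essentially the same route as the paper: the rescaled martingale $a_nS_n$, the identification of its predictable quadratic variation via Lemma \ref{L-ML}, and a martingale CLT with random limiting covariance (the paper invokes Heyde's theorem with the random norming $\sqrt{w_n}$, which is asymptotically a deterministic multiple of $\sqrt{\sigma_n^2}$, and so obtains \eqref{ANDR} first and \eqref{ANDRML} second, whereas you reverse the order and deduce \eqref{ANDR} from \eqref{ANDRML} by stable convergence; the paper also checks the Lindeberg condition through a fourth conditional moment bound rather than through boundedness of the increments). Both variants are sound and yield the same constants.
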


\begin{rem}
In the special case $d=1$, we find again the law of iterated logarithm and the asymptotic normality
given by Theorem 3.2 and Theorem 3.3 in \cite{Bercu2022}.
\end{rem}


\subsection{The critical regime}

We now turn to the asymptotic behavior of the MERWS in the critical
regime where $p=p_{d,r}$. We start with the almost sure convergence of the position of
the MERWS.

\begin{thm}
\label{T-ASCVG-CR}
We have the almost sure convergence
\begin{equation}
\label{ASCVGCR}
\lim_{n \rightarrow \infty} \frac{1}{n}S_n = 0 \hspace{1cm}\text{a.s.}
\end{equation}
More precisely,
\begin{equation}
\label{ASCVGCRSHARP}
\frac{\|S_n\|^2}{n^2}=O \left( \frac{\log n \log \log n}{n^{1+r}} \right)
\hspace{1cm} \text{a.s.}
\end{equation}
\end{thm}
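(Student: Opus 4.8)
The plan is to run the martingale machine of Section~\ref{S-MA}, the very same one that yields the diffusive estimate \eqref{ASCVGDRSHARP}; the only new feature of the critical regime is that the predictable quadratic variation of the underlying martingale now grows logarithmically instead of polynomially, and this is precisely what turns the $\log n$ of \eqref{ASCVGDRSHARP} into the $\log n \log\log n$ of \eqref{ASCVGCRSHARP}. Concretely, set $a_n = \prod_{k=1}^{n-1}\alpha_k$ with $\alpha_k$ as in \eqref{CEPOSMERWS}, so that $M_n = a_n^{-1}S_n$ is an $\dR^d$-valued martingale by \eqref{CEPOSMERWS}. Since $a_n = \Gamma(n+a)/(\Gamma(n)\Gamma(1+a)) \sim n^a/\Gamma(1+a)$, and since at criticality $p=p_{d,r}$ forces, through \eqref{DEFA} and \eqref{DEFPDR}, the value $a=(1-r)/2$, we obtain $a_n^2 \sim n^{1-r}/\Gamma(1+a)^2$. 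Because $\|S_n\|^2 = a_n^2\|M_n\|^2$, both \eqref{ASCVGCR} and \eqref{ASCVGCRSHARP} reduce to the single almost sure estimate $\|M_n\|^2 = O(\log n \log\log n)$.

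First I would compute the predictable quadratic variation of $M$. Writing $\veps_{n+1}=X_{n+1}-\dE[X_{n+1}\,|\,\cF_n]$ we have $\Delta M_{n+1}=a_{n+1}^{-1}\veps_{n+1}$, and from \eqref{DEFMATRIXA} together with the conditional independence of $A_{n+1}$ and $U_{n+1}$,
\begin{equation*}
\dE\big[X_{n+1}X_{n+1}^T\,\big|\,\cF_n\big] = \frac{1}{n}\Big((p+q)\Sigma_n + 2q\sum_{j=1}^{d-1} J_d^{\,j}\,\Sigma_n\,(J_d^{\,j})^T\Big),
\end{equation*}
whose trace equals $(1-r)\sigma_n^2/n$ since each $J_d^{\,j}$ is orthogonal. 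Invoking Lemma~\ref{L-ML} in the form $\Sigma_n \sim d^{-1}\sigma_n^2 I_d$ together with $J_d^{\,j}(J_d^{\,j})^T=I_d$, the right-hand side is asymptotically isotropic, $\dE[X_{n+1}X_{n+1}^T\,|\,\cF_n]\sim (1-r)(dn)^{-1}\sigma_n^2 I_d$. Feeding in $\sigma_k^2\sim \Sigma\,k^{1-r}$ from \eqref{ASCVGMLTR} and $a_{k+1}^2\sim k^{1-r}/\Gamma(1+a)^2$, the dominant summand of the quadratic variation is $\sim (1-r)\Gamma(1+a)^2\Sigma/(dk)$, whence
\begin{equation*}
\langle M\rangle_n \sim \frac{(1-r)\Gamma(1+a)^2\,\Sigma}{d}\,(\log n)\, I_d \qquad \text{a.s.}
\end{equation*}
This logarithmic divergence of $\langle M\rangle_n$ is the analytic signature of the critical regime. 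The subtracted drift term $a^2 k^{-2}\|S_k\|^2$ in $\dE[\|\veps_{k+1}\|^2\,|\,\cF_k]$ becomes $O(k^{-2}\log k\log\log k)$ after division by $a_{k+1}^2$, hence summable, so it lowers $\langle M\rangle_n$ only by a finite amount and does not alter this rate.

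It then remains to apply the strong law for square-integrable martingales coordinatewise. Each coordinate $M_n^{(i)}$ is a real martingale with $\langle M^{(i)}\rangle_n\to\infty$ and uniformly bounded increments, since $\|\veps_{n+1}\|\le \|X_{n+1}\| + a n^{-1}\|S_n\|\le 1+a$ while $a_{n+1}\to\infty$. The standard bound $(M_n^{(i)})^2 = O\big(\langle M^{(i)}\rangle_n\log\langle M^{(i)}\rangle_n\big)$, summed over $1\le i\le d$, gives $\|M_n\|^2 = O\big(\text{Tr}\langle M\rangle_n\,\log\text{Tr}\langle M\rangle_n\big)=O(\log n\log\log n)$ almost surely, which is exactly the reduction target. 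Multiplying by $a_n^2\sim n^{1-r}/\Gamma(1+a)^2$ yields \eqref{ASCVGCRSHARP}, and since $r>0$ the right-hand side tends to zero, which forces \eqref{ASCVGCR}.

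The main obstacle is the sharp second step: pinning down the exact $\log n$ rate of $\langle M\rangle_n$ requires the isotropy $\Sigma_n\sim d^{-1}\sigma_n^2 I_d$ supplied by Lemma~\ref{L-ML} together with a careful Abel summation of $\sigma_k^2/(k\,a_{k+1}^2)$, and one must retain the full factor $\log\langle M\rangle_n$ rather than $(\log\langle M\rangle_n)^{1+\gamma}$ — this is legitimate precisely because the martingale increments are uniformly bounded. Everything else is a transcription of the diffusive computation with the polynomial growth of $\langle M\rangle_n$ replaced by its critical logarithmic growth.
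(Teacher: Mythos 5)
Your proof is correct and follows essentially the same route as the paper: the same martingale $M_n$ (up to your reciprocal normalization of $a_n$), the same conditional second-moment computation showing that $\text{Tr}\langle M\rangle_n$ grows like $w_n=\sum_k a_k^2\sigma_k^2/k\sim \Gamma^2(a+1)\Sigma\log n$ via Lemma \ref{L-ML} and $2a=b=1-r$, the same strong law $\|M_n\|^2=O\big(\text{Tr}\langle M\rangle_n\log\text{Tr}\langle M\rangle_n\big)$, and the same translation back through $a_n\sim n^{a}/\Gamma(a+1)$. The one blemish is that your estimate of the subtracted drift contribution $a^2a_{k+1}^{-2}k^{-2}\|S_k\|^2$ quotes the very bound \eqref{ASCVGCRSHARP} you are in the course of proving (a crude $\|S_k\|\le k$ only gives a non-summable $O(k^{-(1-r)})$ term); this is harmless here because that term enters $\langle M\rangle_n$ with a negative sign, so the one-sided bound $\text{Tr}\langle M\rangle_n\le 1+bw_n=O(\log n)$ — which is all the strong law needs — holds without it, exactly as the paper argues.
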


\noindent
Hereafter, we focus our attention on the law of iterated logarithm for the MERWS.

\begin{thm}
\label{T-LIL-CR}
We have the law of the iterated logarithm
\begin{equation}
 \limsup_{n \rightarrow \infty} \frac{\|S_n\|^2}{2 \sigma_n^2 \log \sigma_n^2 \log \log \log \sigma_n^2}=  
1\hspace{1cm} \text{a.s.}
 \label{LIL-CR1}
\end{equation}
Moreover, we also have 
\begin{equation}
 \limsup_{n \rightarrow \infty} \frac{\|S_n\|^2}{2 n^{1-r} \log n \log \log \log n}=  
 (1-r) \Sigma \hspace{1cm} \text{a.s.}
 \label{LIL-CR2}
\end{equation}
where $\Sigma$ has a $\cM\cL(1-r)$ distribution.
\end{thm}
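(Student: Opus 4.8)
The plan is to derive both statements from the multidimensional martingale attached to the MERWS in Section \ref{S-MA}, exploiting the fact that in the critical regime the predictable quadratic variation of this martingale grows only logarithmically. Writing $\alpha_n = 1 + a/n$ as in \eqref{CEPOSMERWS} and $a_n = \prod_{k=1}^{n-1}\alpha_k$, the process $M_n = a_n^{-1}S_n$ is an $\dR^d$-valued martingale with respect to $(\cF_n)$, and Stirling's formula gives $a_n \sim n^{a}/\Gamma(1+a)$. At criticality $p = p_{d,r}$ one has $a = (1-r)/2$ by \eqref{DEFA} and \eqref{DEFPDR}, so that $a_n^2 \sim n^{1-r}/\Gamma(1+(1-r)/2)^2$, a quantity comparable to $\sigma_n^2/\Sigma$ by \eqref{ASCVGMLTR}. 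The first step is therefore to record these asymptotics and to reduce the law of the iterated logarithm for $\|S_n\|^2$ to one for $\|M_n\|^2$.

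The heart of the argument is the computation of the predictable quadratic variation $\langle M\rangle_n$. From \eqref{INCMERW}, \eqref{DEFMATRIXA} and \eqref{CEMERWS} one finds $\dE[X_{n+1}X_{n+1}^T \mid \cF_n] = n^{-1}\dE[A_{n+1}\Sigma_n A_{n+1}^T]$, and since each $\pm J_d^i$ is orthogonal while $p + (2d-1)q = 1-r$, Lemma \ref{L-ML} yields $\dE[X_{n+1}X_{n+1}^T \mid \cF_n] \sim \tfrac{1-r}{d}\,n^{-r}\,\Sigma\, I_d$ almost surely; the cyclic invariance carried by $J_d$ is exactly what forces the limit to be a scalar multiple of $I_d$. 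Dividing by $a_{n+1}^2 \sim n^{1-r}/\Gamma(1+(1-r)/2)^2$ and summing, the exponents combine to produce a harmonic (rather than polynomial) sum, so that
\begin{equation*}
\langle M\rangle_n \sim \kappa\, \Sigma\, (\log n)\, I_d \qquad \text{a.s.}
\end{equation*}
for an explicit constant $\kappa$. This logarithmic growth, in place of the polynomial growth of the diffusive regime, is the source of the extra logarithmic factor in \eqref{LIL-CR1}--\eqref{LIL-CR2}.

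With $\langle M\rangle_n$ understood, I would invoke a law of the iterated logarithm for multidimensional martingales, after checking its two standard hypotheses: the almost sure increment bound $\|\Delta M_n\| \le a_n^{-1}$, coming from $\|X_n\| \le 1$, and the conditional Lindeberg condition, both of which follow from the boundedness of the steps and $a_n^{-1}\to 0$. Because $\langle M\rangle_n$ grows like $\log n$, the outer iterated logarithm $\log\log\langle M\rangle_n$ becomes $\log\log\log n$, and the self-normalized form of the law produces \eqref{LIL-CR1} with deterministic right-hand side $1$; re-expressing $\log n$ and the triple logarithm through $\sigma_n^2$ is legitimate because $\sigma_n^2 \sim n^{1-r}\Sigma$ and $\log \sigma_n^2 \sim (1-r)\log n$ by Lemma \ref{L-ML}. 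Finally, \eqref{LIL-CR2} follows from \eqref{LIL-CR1} by substituting the almost sure equivalents $\sigma_n^2 \sim n^{1-r}\Sigma$, $\log\sigma_n^2 \sim (1-r)\log n$ and $\log\log\log\sigma_n^2 \sim \log\log\log n$.

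The main obstacle is the iterated logarithm law itself in this critical, slowly growing regime: most martingale versions are stated under polynomial growth of $\langle M\rangle_n$, whereas here the quadratic variation increases only like $\log n$ and, moreover, carries the random Mittag-Leffler scale $\Sigma$. Handling the random scale is cleanest through the self-normalized statement \eqref{LIL-CR1}, whose right-hand side is deterministic, the random factor $\Sigma$ being reinstated only at the final substitution; but one still has to secure a version valid for logarithmically growing quadratic variation, pin down the precise constant, and verify that it collapses to the one-dimensional result of \cite{Bercu2022} when $d=1$.
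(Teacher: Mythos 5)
Your overall strategy coincides with the paper's: form the martingale $M_n=a_nS_n$, show that at criticality its predictable quadratic variation behaves like a constant times $\Sigma\log n\,I_d$, apply a martingale law of the iterated logarithm, and transfer back to $S_n$ via $a_n\sim\Gamma(a+1)n^{-a}$ and $\sigma_n^2\sim n^{1-r}\Sigma$. The asymptotics you record (in particular $a=(1-r)/2$ at criticality and $\dE[X_{n+1}X_{n+1}^T\mid\cF_n]\sim\frac{1-r}{d}n^{-r}\Sigma I_d$) are correct, and the final substitution producing \eqref{LIL-CR2} from \eqref{LIL-CR1} is exactly what the paper does.

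However, there is a genuine gap at the decisive step, and you acknowledge it yourself: you never produce the law of the iterated logarithm that actually applies here, and the hypotheses you propose to check (an almost sure increment bound plus a conditional Lindeberg condition) are not the right ones for an LIL with a \emph{random} norming of Mittag--Leffler type. The paper resolves this by invoking Theorem 1 and Corollary 2 of Heyde \cite{Heyde1977}, which are tailored to this situation; their hypotheses are (i) the almost sure convergence $\langle M(u)\rangle_n/w_n\to\frac{b}{d}\|u\|^2$, which requires the full matrix convergence \eqref{LIMGRAM} of Lemma \ref{L-ML} both to identify the limit and to kill the $S_kS_k^T$ contribution in \eqref{CALCIPM}; (ii) the summability $\sum_n(\log n)^{-2}\dE[|\Delta M_n(u)|^4]<\infty$, which rests on the explicit fourth conditional moment computation \eqref{CM4EPS} and the resulting bound $\dE[\|\varepsilon_{n+1}\|^4]\le 7/(\Gamma(b)n^{1-b})$ in \eqref{UBMOM4EPS} --- a calculation entirely absent from your outline; and (iii) the almost sure convergence of the auxiliary martingale $Q_n(u)=\sum_{k\ge2}\frac{a_k^2}{\log k}\bigl(\varepsilon_k^2(u)-\dE[\varepsilon_k^2(u)\mid\cF_{k-1}]\bigr)$, which is the stabilization condition that legitimizes replacing $\langle M(u)\rangle_n$ by its deterministic-times-$\Sigma$ equivalent inside the iterated logarithm. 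Without supplying (ii) and (iii) --- or an alternative such as Stout's LIL with its $\cF_{n-1}$-measurable increment condition, carefully verified --- the argument does not close; the "main obstacle" you flag at the end is precisely the theorem the proof needs, so the proposal as written stops short of a proof.
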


\begin{rem} The law of iterated logarithm
\eqref{LIL-CR2} improves \eqref{ASCVGCRSHARP} as
\begin{equation*}
\label{RATECR}
\frac{\|S_n\|^2}{n^2} = O\left( \frac{\log n \log \log \log n}{n^{1+r}} \right) \hspace{1cm}\text{a.s.}
\end{equation*}
\end{rem}

\noindent
Our next result concerns the asymptotic normality for the MERWS.

\begin{thm}
\label{T-AN-CR}
We have the asymptotic normality
\begin{equation}
\label{ANCR}
\frac{S_n}{\sqrt{\sigma_n^2 \log \sigma_n^2 }} \underset{n\rightarrow+\infty}{\overset{\cL}{\longrightarrow}} \cN\Big(0,\frac{1}{d} I_d\Big).
\end{equation}
Moreover, we also have
\begin{equation}
\label{ANCRML}
\frac{S_n}{\sqrt{n^{1-r} \log n}} \underset{n\rightarrow+\infty}{\overset{\cL}{\longrightarrow}} \sqrt{(1-r)\Sigma^\prime}\cN\Big(0,\frac{1}{d} I_d\Big)
\end{equation}
where $\Sigma^\prime$ is independent of the Gaussian distribution at the right-hand side of \eqref{ANCRML}
and $\Sigma^\prime$ has a $\cM\cL(1-r)$ distribution.
\end{thm}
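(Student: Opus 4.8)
The plan is to rely on the vector martingale already hidden in the MERWS. From \eqref{CEPOSMERWS}, setting $a_n = \prod_{k=1}^{n-1}(1+a/k)^{-1}$ makes $M_n = a_n S_n$ a martingale with respect to $(\cF_n)$, and the classical Gamma-function estimate gives $a_n \sim \Gamma(1+a)\,n^{-a}$. A direct computation at the critical value $p=p_{d,r}$ shows that the parameter $a$ of \eqref{DEFA} equals $(1-r)/2$, so that $a_n \sim \Gamma(1+a)\,n^{-(1-r)/2}$. The whole proof then reduces to a central limit theorem for $M_n$, after which one divides by $a_n$ and compares the deterministic scaling with $\sigma_n^2$ through Lemma~\ref{L-ML}.

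The heart of the argument is the predictable quadratic variation $\langle M \rangle_n = \sum_{k=1}^{n-1}\dE[\Delta M_{k+1}\Delta M_{k+1}^T\,|\,\cF_k]$. Writing $\Delta M_{k+1} = a_{k+1}(X_{k+1} - \tfrac{a}{k}S_k)$ and using the conditional law of $X_{k+1}$ from \eqref{STEPSMERWS} together with \eqref{DEFMATRIXA}, I would first establish
\[
\dE[X_{k+1}X_{k+1}^T\,|\,\cF_k] = \frac{1}{k}\Big((p+q)\Sigma_k + 2q\sum_{j=1}^{d-1}J_d^j\,\Sigma_k\,(J_d^j)^T\Big).
\]
Here the genuinely multidimensional phenomenon surfaces: by Lemma~\ref{L-ML} one has $\Sigma_k/k^{1-r}\to d^{-1}\Sigma I_d$ almost surely, and since every $J_d^j$ is orthogonal, $J_d^j I_d (J_d^j)^T = I_d$, so the cyclic averaging collapses to a scalar multiple of the identity. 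Using the identity $p+q+2q(d-1)=1-r$, this yields $\dE[X_{k+1}X_{k+1}^T|\cF_k]\sim (1-r)d^{-1}k^{-r}\Sigma I_d$, while the remaining term $\tfrac{a^2}{k^2}S_kS_k^T$ is negligible thanks to the sharp bound $\|S_k\|^2 = O(k^{1-r}\log k\,\log\log k)$ furnished by Theorem~\ref{T-ASCVG-CR}. Multiplying by $a_{k+1}^2\sim\Gamma(1+a)^2 k^{-(1-r)}$ produces a summand $\sim \Gamma(1+a)^2(1-r)d^{-1}k^{-1}\Sigma I_d$, and summing against $\sum_{k=1}^{n-1}k^{-1}\sim\log n$ gives the critical logarithmic rate
\[
\lim_{n\to\infty}\frac{\langle M \rangle_n}{\log n} = \frac{(1-r)\Gamma(1+a)^2}{d}\,\Sigma\,I_d \qquad\text{a.s.}
\]

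With the quadratic variation in hand, I would invoke the central limit theorem for multidimensional martingales of Section~\ref{S-MA}. The Lindeberg condition is immediate, since $\|X_{k+1}\|\le 1$ and $\tfrac{a}{k}\|S_k\|\le a$ force $\|\Delta M_{k+1}\|\le (1+a)a_{k+1}\to 0$, so the normalized jumps vanish uniformly. Because the limit of $\langle M \rangle_n/\log n$ is a \emph{random} isotropic matrix, the correct conclusion is the stable (mixing) self-normalized statement $\langle M \rangle_n^{-1/2}M_n \overset{\cL}{\longrightarrow}\cN(0,I_d)$, and it is precisely the isotropy $\propto I_d$ that will let the Mittag-Leffler factor cancel. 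Translating back through $S_n = a_n^{-1}M_n$ and the scalar asymptotics of $\langle M \rangle_n$, the constants $\Gamma(1+a)$ cancel and one finds $S_n \approx n^{(1-r)/2}\big((1-r)d^{-1}\Sigma\log n\big)^{1/2}\cN(0,I_d)$. Normalizing by $\sqrt{\sigma_n^2\log\sigma_n^2}$ and using Lemma~\ref{L-ML}, namely $\sigma_n^2/n^{1-r}\to\Sigma$ and $\log\sigma_n^2\sim(1-r)\log n$, makes the random $\Sigma$ and the factor $1-r$ disappear and yields \eqref{ANCR}. The deterministically normalized statement \eqref{ANCRML} then follows by Slutsky's lemma, since $\sqrt{\sigma_n^2\log\sigma_n^2/(n^{1-r}\log n)}\to\sqrt{(1-r)\Sigma}$ almost surely and the Gaussian limit in \eqref{ANCR}, arising from a stable convergence, is independent of the limiting variable $\Sigma^\prime$.

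The step I expect to be the main obstacle is the second one: controlling $\langle M \rangle_n$ in the critical regime, where the quadratic variation grows only logarithmically and its limit is the random Mittag-Leffler variable rather than a deterministic constant. Pinning down both the exact constant and the isotropic structure $\propto I_d$, while rigorously discarding the $S_kS_k^T$ contribution via Theorem~\ref{T-ASCVG-CR}, is the delicate part; the ensuing martingale CLT and the cancellation of $\Sigma$ in the self-normalization are then essentially mechanical.
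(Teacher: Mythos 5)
Your proposal follows essentially the same route as the paper: the martingale $M_n=a_nS_n$, the almost sure asymptotics $\langle M \rangle_n/\log n \to (b/d)\Gamma^2(a+1)\Sigma I_d$ deduced from Lemma \ref{L-ML} and Toeplitz's lemma, a martingale CLT with random norming giving mixing convergence, and self-normalization by $\sigma_n^2\log\sigma_n^2$ to cancel the Mittag-Leffler factor. The only point you pass over is the hypothesis beyond Lindeberg that licenses the CLT when the limiting quadratic variation is random -- the paper checks it as the almost sure convergence of the auxiliary martingale $Q_n(u)=\sum_{k=2}^{n} a_k^2(\log k)^{-1}\bigl(\varepsilon_k^2(u)-\dE[\varepsilon_k^2(u)\,|\,\cF_{k-1}]\bigr)$ required by Heyde's theorem -- but this is a routine $\dL^2$-boundedness verification that does not alter the architecture of your argument.
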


\begin{rem}
In the special case $d=1$, we find again the law of iterated logarithm and the asymptotic normality
given by Theorem 3.5 and Theorem 3.6 in \cite{Bercu2022}.
\end{rem}


\subsection{The superdiffusive regime}

Finally, we investigate the asymptotic behavior of the MERWS in the superdiffusive
regime where $p>p_{d,r}$. Denote by $\vartheta^2$ the asymptotic variance
\begin{equation}
\label{VARSR}
\vartheta^2=\frac{(2d-1)(1-r)}{d(4dp-(2d+1)(1-r))}.
\end{equation}

\begin{thm}
\label{T-ASCVG-SR}
We have the almost sure convergence
\begin{equation}
\label{ASCVGSR1}
\lim_{n \rightarrow \infty} \frac{S_n}{n^{p-q}} = L \hspace{1cm}\text{a.s.}
\end{equation}
where $L$ is a non-degenerate random vector. 
Moreover, we also have the mean square convergence
\begin{equation}
\label{ASCVGSR2}
 \lim_{n \rightarrow \infty} \dE\Bigl[ \Bigl\| \frac{S_n}{n^{p-q}} -L \Bigr\|^2 \Bigr]=0.
\end{equation}
In addition, $\dE[L]=0$ and its covariance matrix is given by
\begin{equation}
\label{COVL}
\dE[L L^T]=\frac{\vartheta^2}{(1-r)\Gamma(2(p-q))}I_d.
\end{equation}
\end{thm}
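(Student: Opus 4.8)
The plan is to turn $(S_n)$ into a vector martingale and to show that it is bounded in $\dL^2$ exactly in the superdiffusive regime. Recalling from \eqref{CEPOSMERWS} that $\dE[S_{n+1}|\cF_n]=\alpha_n S_n$ with $\alpha_n=1+a/n$ and $a=p-q$, I would set $\gamma_n=\prod_{k=1}^{n-1}\alpha_k^{-1}=\Gamma(n)\Gamma(1+a)/\Gamma(n+a)$ and $M_n=\gamma_n S_n$. Then $(M_n)$ is an $\dR^d$-valued $(\cF_n)$-martingale with increment $M_{n+1}-M_n=\gamma_{n+1}\bigl(X_{n+1}-\dE[X_{n+1}|\cF_n]\bigr)$, and since $\gamma_n\sim\Gamma(1+a)\,n^{-a}$, the relation $S_n=\gamma_n^{-1}M_n$ will transfer any convergence of $M_n$ into the desired convergence of $n^{-(p-q)}S_n$.

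The computational core is the conditional second moment of a step. Conditioning on $\cF_n$, using \eqref{STEPSMERWS}, the conditional independence of $A_{n+1}$ and $U_{n+1}$, and the fact that each $X_kX_k^T$ is either $0$ or a coordinate projector $e_ie_i^T$, a direct evaluation of $\dE[A_{n+1}e_ie_i^TA_{n+1}^T]$ over the $2d$ signed powers of $J_d$ gives
\begin{equation*}
\dE[X_{n+1}X_{n+1}^T|\cF_n]=\frac{a}{n}\Sigma_n+\frac{2q}{n}\sigma_n^2 I_d \qquad\text{a.s.}
\end{equation*}
Taking the trace yields $\dE[\|X_{n+1}\|^2|\cF_n]=\frac{1-r}{n}\sigma_n^2$, whence $\dE[\sigma_n^2]=\Gamma(n+1-r)/(\Gamma(n)\Gamma(2-r))\sim n^{1-r}/\Gamma(2-r)$ by a telescoping product. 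Consequently $\dE[\|M_{n+1}-M_n\|^2]\le\gamma_{n+1}^2\,\tfrac{1-r}{n}\dE[\sigma_n^2]=O(n^{-2a-r})$, and the elementary identity that $2a+r>1$ is equivalent to $p>p_{d,r}$ shows $\sum_n\dE[\|M_{n+1}-M_n\|^2]<\infty$ precisely in the superdiffusive regime. Hence $(M_n)$ is bounded in $\dL^2$, so it converges almost surely and in $\dL^2$ to some $M_\infty$, and $n^{-(p-q)}S_n\to L:=M_\infty/\Gamma(1+a)$ almost surely and in mean square, which is \eqref{ASCVGSR1}--\eqref{ASCVGSR2}. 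Since $\dE[M_n]=\gamma_n\dE[S_n]=0$, the $\dL^1$ convergence gives $\dE[L]=0$.

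For the covariance \eqref{COVL} I would bypass the martingale bracket and derive a closed recursion for $V_n=\dE[S_nS_n^T]$. From \eqref{POSMERWS}, \eqref{CEMERWS} and the second-moment identity,
\begin{equation*}
V_{n+1}=\Bigl(1+\frac{2a}{n}\Bigr)V_n+\frac{a}{n}\dE[\Sigma_n]+\frac{2q}{n}\dE[\sigma_n^2]I_d .
\end{equation*}
An analogous recursion for $\dE[\Sigma_n]$, with $V_1=\dE[\Sigma_1]=\tfrac1d I_d$, shows by induction that both $\dE[\Sigma_n]$ and $V_n$ remain proportional to $I_d$; writing $V_n=\tfrac1d v_n I_d$ and using $\tfrac{a}{d}+2q=\tfrac{1-r}{d}$ reduces everything to the scalar recursion $v_{n+1}=(1+\tfrac{2a}{n})v_n+\tfrac{1-r}{n}\dE[\sigma_n^2]$ with $v_1=1$. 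Dividing by $P_n=\prod_{k=1}^{n-1}(1+\tfrac{2a}{k})\sim n^{2a}/\Gamma(1+2a)$ gives $v_n/n^{2a}\to\Gamma(1+2a)^{-1}\bigl(1+\sum_{k\ge1}g_k/P_{k+1}\bigr)$ with $g_k=\tfrac{1-r}{k}\dE[\sigma_k^2]$.

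The main obstacle is the exact evaluation of this series. After simplifying with $\Gamma(2-r)=(1-r)\Gamma(1-r)$, it becomes $\tfrac{\Gamma(1+2a)}{\Gamma(1-r)}\sum_{k\ge1}\tfrac{\Gamma(k+1-r)}{\Gamma(k+1+2a)}$, which I would collapse by the telescoping identity $\tfrac{\Gamma(k+b)}{\Gamma(k+c)}=\tfrac{1}{c-b-1}(T_k-T_{k+1})$ with $T_k=\Gamma(k+b)/\Gamma(k+c-1)$, legitimate here since $c-b-1=2a+r-1>0$ in the superdiffusive regime; this yields $\sum_{k\ge1}g_k/P_{k+1}=\tfrac{1-r}{2a+r-1}$. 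Hence $v_n/n^{2a}\to\tfrac{2a}{(2a+r-1)\Gamma(1+2a)}=\tfrac{1}{(2a+r-1)\Gamma(2a)}$, and the $\dL^2$ convergence forces $\dE[LL^T]=\lim_n n^{-2(p-q)}V_n=\tfrac{1}{d(2a+r-1)\Gamma(2(p-q))}I_d$. A final algebraic check that $2a+r-1=(4dp-(2d+1)(1-r))/(2d-1)$, i.e. $\tfrac{1}{d(2a+r-1)}=\tfrac{\vartheta^2}{1-r}$ with $\vartheta^2$ as in \eqref{VARSR}, delivers \eqref{COVL}; in particular $\dE[\|L\|^2]>0$, so $L$ is non-degenerate.
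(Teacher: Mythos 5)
Your proposal is correct and follows essentially the same route as the paper: the same normalized martingale $M_n=a_nS_n$, $\dL^2$-boundedness in the superdiffusive regime to get almost sure and mean-square convergence, and the same second-moment recursions \eqref{CEGRAM} and \eqref{CALCSNSNT} for the covariance. The only (harmless) differences are organizational: you deduce $\dL^2$-boundedness from summability of $\dE\|M_{n+1}-M_n\|^2$ rather than from the a.s. convergence of $\mathrm{Tr}\langle M\rangle_n$, and you solve the moment recursions by normalizing with $P_n$ and telescoping, where the paper writes the same sums in closed form via rising factorials (your telescoping identity is exactly Lemma B.1 of \cite{Bercu2018} used there).
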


\noindent
Our last result is devoted to the Gaussian fluctuations of the MERWS around $L$, 
in the spirit of the seminal work of Kubota and Takei \cite{Kubota2019} inspired by Heyde \cite{Heyde1977}. 

\begin{thm}
\label{T-AN-SR}
We have the Gaussian fluctuation
\begin{equation}
\label{ANSR1}
\frac{S_n-n^{p-q}L}{\sqrt{\sigma_n^2}}
\liml \cN\big(0,\vartheta^2 I_d\big).
\end{equation}
Moreover, we also have
\begin{equation}
\label{ANSR2}
\frac{S_n -n^{p-q}L }{\sqrt{n^{1-r}}} 
\liml \sqrt{\Sigma^\prime}\cN\big(0,\vartheta^2I_d\big)
\end{equation}
where $\Sigma^\prime$ is independent of the Gaussian distribution at the right-hand side of \eqref{ANSR2}
and $\Sigma^\prime$ has a $\cM\cL(1-r)$ distribution.
\end{thm}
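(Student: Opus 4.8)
The plan is to realize $S_n-n^{p-q}L$ as the tail of a vector martingale and then apply a central limit theorem for martingale tails in the spirit of Heyde and Kubota--Takei, the self-normalization by $\sigma_n^2$ being precisely what turns the otherwise random Mittag--Leffler limit into a deterministic variance. Following the martingale approach of Section \ref{S-MA}, set $a=p-q$ and
$$a_n=\prod_{k=1}^{n-1}\Bigl(1+\frac{a}{k}\Bigr)=\frac{\Gamma(n+a)}{\Gamma(n)\,\Gamma(1+a)},\qquad M_n=\frac{1}{a_n}S_n.$$
Then \eqref{CEPOSMERWS} gives $\dE[M_{n+1}|\cF_n]=M_n$, so $(M_n)$ is a vector martingale which, in the superdiffusive regime, converges a.s. and in $\dL^2$ to a limit $M_\infty$. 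Since $a_n\sim n^{a}/\Gamma(1+a)$ and, by Theorem \ref{T-ASCVG-SR}, $n^{-a}S_n\to L$, one has $M_\infty=\Gamma(1+a)L$. Writing $\varepsilon_{k+1}=X_{k+1}-\dE[X_{k+1}|\cF_k]$ for the increments, I would first establish the reduction
$$S_n-n^{p-q}L=a_n\bigl(M_n-M_\infty\bigr)+\bigl(a_n\Gamma(1+a)-n^{a}\bigr)L=-\sum_{k\geq n}\frac{a_n}{a_{k+1}}\,\varepsilon_{k+1}+O\bigl(n^{a-1}\bigr)L,$$
where the remainder is negligible after division by $\sqrt{\sigma_n^2}\sim n^{(1-r)/2}\sqrt{\Sigma}$ because $a<1$ forces $a-1<(1-r)/2$. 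Thus the fluctuation is governed by the martingale tail $\sum_{k\geq n}(a_n/a_{k+1})\varepsilon_{k+1}$.

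Next I would compute the conditional covariance of the increments. Since $X_{k+1}=A_{k+1}X_{U_{k+1}}$ with $A_{k+1}$ distributed as in \eqref{DEFMATRIXA}, a direct computation gives $\dE[A_{k+1}\,B\,A_{k+1}^T]=(p+q)B+2q\sum_{l=1}^{d-1}J_d^{\,l}B(J_d^{\,l})^T$ for any matrix $B$, whence, averaging over $U_{k+1}$ and using \eqref{GRAMMERWS},
$$\dE\bigl[\varepsilon_{k+1}\varepsilon_{k+1}^T\,|\,\cF_k\bigr]=\frac{1}{k}\Bigl[(p+q)\Sigma_k+2q\sum_{l=1}^{d-1}J_d^{\,l}\Sigma_k(J_d^{\,l})^T\Bigr]-\frac{a^2}{k^2}S_kS_k^T.$$
Lemma \ref{L-ML}, together with $J_d^{\,l}I_d(J_d^{\,l})^T=I_d$ and $p+q+2q(d-1)=1-r$, then yields $\dE[\varepsilon_{k+1}\varepsilon_{k+1}^T|\cF_k]\sim\frac{(1-r)\Sigma}{d}\,k^{-r}I_d$ a.s., the contribution of $S_kS_k^T$ being of lower order since $2a-2<-r$. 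Summing with the weights $(a_n/a_{k+1})^2\sim(n/k)^{2a}$ and using $2a+r>1$ in the superdiffusive regime, an Abel/Toeplitz summation gives the tail conditional variance
$$\eta_n^2:=\sum_{k\geq n}\Bigl(\frac{a_n}{a_{k+1}}\Bigr)^2\dE\bigl[\varepsilon_{k+1}\varepsilon_{k+1}^T\,|\,\cF_k\bigr]\sim\frac{(1-r)}{d\,(2(p-q)+r-1)}\,\sigma_n^2\,I_d=\vartheta^2\,\sigma_n^2\,I_d\quad\text{a.s.},$$
the last equality being the algebraic identity $2(p-q)+r-1=\frac{4dp-(2d+1)(1-r)}{2d-1}$ obtained from \eqref{DEFA}.

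It then remains to invoke a central limit theorem for the martingale tail. The Lindeberg condition is immediate, since $\|\varepsilon_{k+1}\|\leq 2$ and the weights are bounded by one, so that every increment stays bounded while $\eta_n^2\to\infty$; combined with the conditional variance above and the fact that $\eta_n^2/\sigma_n^2\to\vartheta^2 I_d$ is \emph{deterministic}, the tail CLT delivers the stable convergence of $(S_n-n^{p-q}L)/\sqrt{\sigma_n^2}$ to $\cN(0,\vartheta^2I_d)$, which is \eqref{ANSR1}. For \eqref{ANSR2} it suffices to multiply by $\sqrt{\sigma_n^2/n^{1-r}}$ and use \eqref{ASCVGMLTR} together with the stability of the convergence, which ensures that the $\cM\cL(1-r)$ factor $\Sigma^\prime$ is independent of the Gaussian.

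I expect the main obstacle to be the tail CLT itself. Because $S_n-n^{p-q}L$ is a \emph{backward} sum rather than a forward partial sum of $\cF_n$-martingale differences, one cannot quote the usual martingale CLT directly but must use a Heyde-type statement (or truncate to a window $n\leq k\leq\beta n$, prove a CLT there, and control the remaining tail as $\beta\to\infty$). One must also carefully justify interchanging the almost sure limits of Lemma \ref{L-ML} with the infinite weighted sum defining $\eta_n^2$, and verify that the convergence is genuinely stable so that the independence of $\Sigma^\prime$ from the limiting Gaussian in \eqref{ANSR2} is legitimate.
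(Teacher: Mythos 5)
Your proposal follows essentially the same route as the paper: represent $S_n-n^{p-q}L$ as the tail of the martingale $(M_n)$, compute the tail conditional covariance from \eqref{CVAREPSILON} and Lemma \ref{L-ML} to get $\vartheta^2\sigma_n^2 I_d$, invoke Heyde's tail CLT coordinatewise, finish with Cram\'er--Wold, and deduce \eqref{ANSR2} from \eqref{ASCVGMLTR} and the mixing character of the convergence. Your normalization is the reciprocal of the paper's ($M_n=S_n/a_n$ versus $M_n=a_nS_n$), your explicit control of the deterministic remainder $(a_n\Gamma(1+a)-n^a)L$ is a point the paper leaves implicit, and your Lindeberg argument via $\|\varepsilon_{k+1}\|\leq 2$ together with $\eta_n^2\to\infty$ is a clean substitute for the paper's fourth-moment computation \eqref{PANSR8}--\eqref{PANSR9}. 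All of these computations check out, including the algebra identifying $b/(d(2a-b))$ with $\vartheta^2$.

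The one substantive step you leave unverified is precisely the hypothesis of Heyde's theorem beyond the Lindeberg condition and the a.s.\ convergence of the normalized conditional variance: one must also show that the tail sum of the \emph{actual} squared increments has the same asymptotics as the tail sum of their conditional expectations. The paper handles this by introducing the auxiliary martingale $P_n(u)=\sum_{k\leq n}k^{2a-b}a_k^2\bigl(\varepsilon_k^2(u)-\dE[\varepsilon_k^2(u)\,|\,\cF_{k-1}]\bigr)$, proving it is bounded in $\dL^2$ via the fourth-moment bound \eqref{UBMOM4EPS}, and concluding a.s.\ convergence by Doob; Kronecker's lemma then transfers \eqref{PANSR6} to the uncompensated sums. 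You correctly flag the tail CLT as the main obstacle, but since the Lindeberg condition alone does not suffice for Heyde's Theorem 1, this verification (which your own bound $\|\varepsilon_{k+1}\|\leq 2$ would also make routine, since $\dE[\varepsilon_k^4(u)\,|\,\cF_{k-1}]\leq C\,\dE[\varepsilon_k^2(u)\,|\,\cF_{k-1}]$) should be carried out explicitly to complete the argument.
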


\begin{rem}
In the special case $d=1$, we find again the variance of $L$ as well as the Gaussian fluctuations given by 
Theorem 3.8 and Theorem 3.9 in \cite{Bercu2022}.  
\end{rem}

\vspace{-2ex}
\section{Our multidimensional martingale approach}
\label{S-MA}
Let $(M_n)$ be the sequence of $\dR^d$ defined, for all $n \geq 0$, by 
\begin{equation}
\label{DEFMN}
M_n=a_nS_n
\end{equation}
where the initial values $a_0=1$, $a_1=1$ and for all $n \geq 2$,
\begin{equation}
\label{DEFAN}
a_n=\frac{\Gamma(n)\Gamma(a+1)}{\Gamma(n+a)}= \frac{ a(n-1)!}{(a)^{(n)}}
\end{equation}
where $(a)^{(n)}$ stands for the rising factorial defined by $(a)^{(n)}=a(a+1) \cdots(a+n-1)$.
It follows from the asymptotic behavior of the Euler Gamma function that
$$
\lim_{n \rightarrow \infty} \frac{\Gamma(n+a)}{\Gamma(n)n^a}=1,
$$
which immediately leads to
\begin{equation}
\label{LIMAN}
\lim_{n \rightarrow \infty} n^a a_n=\Gamma(a+1).
\end{equation}
Moreover, since $a_n=a_{n+1} \alpha_n$, we obtain from
\eqref{CEPOSMERWS} and \eqref{DEFMN} that 
$$
\dE[M_{n+1} | \cF_n]=a_{n+1}\dE[S_{n+1} | \cF_n]=a_{n+1} \alpha_n S_n=a_{n}S_n=M_n
\hspace{1cm} \text{a.s.}
$$
Hence, $(M_n)$ is a multidimensional discrete martingale sequence which can be rewritten in 
the additive form
\begin{equation} 
\label{NEWDEFMN}
M_n=\sum_{k=1}^n a_k \varepsilon_k
\end{equation}
where the martingale difference sequence $(\varepsilon_n)$ is given by $\varepsilon_1=S_1$ and, for all $n \geq 1$,
$\varepsilon_{n+1}=S_{n+1}-\alpha_{n} S_{n}$. The predictable quadratic variation associated with $(M_n)$ is the random square matrix of order $d$ given, for all $n \geq 1$, by
\begin{equation} 
\label{PQVMN}
\langle M \rangle_n =\frac{1}{d} I_d+ \sum_{k=1}^{n-1} a_{k+1}^2\dE[\varepsilon_{k+1} \varepsilon_{k+1}^T | \mathcal{F}_{k}]
\hspace{1cm} \text{a.s.}
\end{equation}
In addition, through a careful use of \eqref{DEFMATRIXA} and \eqref{STEPSMERWS}, we have that
\begin{align*}
   \dE[X_{n+1}X_{n+1}^T| \cF_n] &= \dE[A_{n+1} X_{U_{n+1}} X_{U_{n+1}}^T A_{n+1}^T | \cF_n]  \hspace{1cm} \text{a.s.} \\
   &= \sum_{k=1}^n  \dE[A_{n+1} X_k X_k^T A_{n+1}^T \rI_{\{U_{n+1}=k\}}| \cF_n] \hspace{1cm} \text{a.s.} \\
   &=  \frac{1}{n}  \sum_{k=1}^n  \dE[A_{n+1} X_k X_k^T A_{n+1}^T | \cF_n] \hspace{1cm} \text{a.s.} \\
   &=  \frac{(p+q)}{n}  \sum_{k=1}^n   X_k X_k^T  +  \frac{2q}{n}  \sum_{k=1}^n \sum_{i=1}^{d-1} J_d^i X_k X_k^T (J_d^i)^T   \hspace{1cm} \text{a.s.}\\
   &=  \frac{(p-q)}{n}  \sum_{k=1}^n   X_k X_k^T  +  \frac{2q}{n}  \sum_{k=1}^n \sum_{i=1}^{d} J_d^i X_k X_k^T (J_d^i)^T 
\hspace{1cm} \text{a.s.}
\end{align*}
which implies via the definition of the permutation matrix $J_d$ given by \eqref{DEFIJ} that
\begin{equation}
\label{CVARMERWS}
   \dE[X_{n+1}X_{n+1}^T| \cF_n] = \frac{a}{n}\Sigma_n + \frac{2q}{n}\sigma_n^2 I_d
   \hspace{1cm} \text{a.s.}
\end{equation}
where
\begin{equation}
\label{TRGRAMMERWS}
\sigma_{n}^2 = \text{Tr}(\Sigma_n) = \sum_{k=1}^n \| X_k \|^2.
\end{equation}
Therefore, we obtain from \eqref{CEMERWS} and \eqref{CVARMERWS} that
\begin{align}
   \dE[S_{n+1}S_{n+1}^T| \cF_n] &= \dE[(S_{n}+X_{n+1})(S_n+X_{n+1})^T | \cF_n]  \hspace{1cm} \text{a.s.} \nonumber\\
   &=\left(1+\frac{2a}{n}\right) S_nS_n^T + \frac{a}{n}\Sigma_n + \frac{2q}{n}\sigma_n^2 I_d
\hspace{1cm} \text{a.s.}
\label{CVARSN}
\end{align}
leading to
\begin{align}
   \dE[\varepsilon_{n+1}\varepsilon_{n+1}^T| \cF_n] &=  \dE[S_{n+1}S_{n+1}^T| \cF_n] - \alpha_n^2 S_n S_n^T
     \hspace{1cm} \text{a.s.} \nonumber\\
   &=\left(1+\frac{2a}{n} - \alpha_n^2 \right) S_nS_n^T + \frac{a}{n}\Sigma_n + \frac{2q}{n}\sigma_n^2 I_d
\hspace{1cm} \text{a.s.} \nonumber\\
  &=  \frac{a}{n}\Sigma_n + \frac{2q}{n}\sigma_n^2 I_d - \left(\frac{a}{n} \right)^2 S_nS_n^T
\hspace{1cm} \text{a.s.}
\label{CVAREPSILON}
\end{align}
Consequently, we find from \eqref{PQVMN} and \eqref{CVAREPSILON} that the predictable quadratic variation
$\langle M \rangle_n$ can be splitted into three matrices
\begin{equation}
\label{CALCIPM}
\langle M \rangle_n = \frac{1}{d}I_d + V_n - a^2 W_n
\hspace{1cm} \text{a.s.}
\end{equation}
where
\begin{equation*}
V_n = a\sum_{k=1}^{n-1}  \left(\frac{a_{k+1}^2}{k}\right) \Sigma_k
+2q \sum_{k=1}^{n-1}  \left(\frac{a_{k+1}^2}{k}\right) \sigma_k^2 I_d
\qquad \text{and} \qquad
W_n = \sum_{k=1}^{n-1}  \left(\frac{a_{k+1}}{k}\right)^2 S_kS_k^T.
\end{equation*}
As it was already the case for the unidimensional ERWS \cite{Bercu2022}, the main difficulty arising here is that the Gram matrix $\Sigma_n$, properly normalized, will converge to the product of a Mittag-Leffler random variable and a deterministic matrix. We deduce from \eqref{GRAMMERWS} and \eqref{CVARMERWS} that for all $n \geq 1$,
\begin{equation}
\label{CEGRAM}
   \dE[\Sigma_{n+1}| \cF_n] = \left(1+\frac{a}{n}\right)\Sigma_n + \frac{2q}{n}\sigma_n^2 I_d
   \hspace{1cm} \text{a.s.}
\end{equation}
By taking the trace on both sides of \eqref{CEGRAM}, we obtain from \eqref{TRGRAMMERWS} that for all $n \geq 1$,
\begin{equation}
\label{CETRGRAM}
   \dE[\sigma_{n+1}^2| \cF_n] = \left(1+\frac{b}{n}\right)\sigma_n^2 
   \hspace{1cm} \text{a.s.}
\end{equation}
where the second fundamental parameter $b$ is given by
\begin{equation}
\label{DEFB}
b=a+2dq=1-r.
\end{equation}
One can observe $\Sigma_n$ is the diagonal matrix of order $d$ given by
\begin{equation}
\label{GRAMDIAG}
\Sigma_n = 
\begin{pmatrix}
   \sigma_n^2(1) & 0 & \cdots &   0 \\
   0 & \sigma_n^2(2) & \ddots &  \vdots \\
    \vdots & \ddots & \ddots &  0 \\
   0 & \cdots & 0 & \sigma_n^2(d)
\end{pmatrix}
\end{equation}
where for all $1\leq i \leq d$,
$$
 \sigma_n^2(i)=\sum_{k=1}^n X_k^2(i)
$$
and $X_n(i)$ stands for the $i$-th coordinate of the random vector $X_n$. In addition, one can immediately see that
\begin{equation}
\label{DECSIGMAN2}
\sigma_n^2=\sum_{i=1}^d \sigma_n^2(i)= \sum_{i=1}^d \sum_{k=1}^n X_k^2(i).
\end{equation}
The asymptotic behavior of $\sigma_n^2$ will allow us to identify the asymptotic behavior of $(\sigma_n^2(1),\ldots,\sigma_n^2(d))$, that is the one of the Gram matrix $\Sigma_n$. Hereafter, let $(N_n)$ be the sequence defined, for all $n \geq 1$, by 
\begin{equation}
\label{DEFNN}
N_n=b_n \sigma_n^2
\end{equation}
where the initial term $b_1=1$ and for all $n \geq 2$,
\begin{equation}
\label{DEFBN}
b_n=\frac{\Gamma(n)\Gamma(b+1)}{\Gamma(n+b)}= \frac{ b(n-1)!}{(b)^{(n)}}.
\end{equation}
As previously seen for the multidimensional martingale $(M_n)$, it follows from \eqref{CETRGRAM} and \eqref{DEFNN} that 
$\dE[N_{n+1} | \cF_n]=b_{n+1}\dE[\sigma_{n+1}^2 | \cF_n]=b_{n}\sigma_n^2=N_n$ a.s. which means that $(N_n)$ is
a unidimensional discrete martingale sequence. Its asymptotic behavior was previously established in 
Lemma 4.1 of \cite{Bercu2022} as follows.

\begin{lem}
\label{L-MARTN}
The martingale $(N_n)$ is bounded in $\dL^m$ for any integer $m\geq 1$. 
More precisely, for all $n\geq 1$ and for any integer $m\geq 1$, 
\begin{equation}
\label{MOMN}
\dE[N_n^m] \leq m!.
\end{equation}
Consequently, $(N_n)$ converges almost surely and in $\dL^m$ to a finite random variable $N$ satisfying
for any integer $m\geq 1$, 
\begin{equation}
\label{CALCMOMN}
\dE[N^m] =\frac{m!(\Gamma(b+1))^m}{\Gamma(1+mb)}.
\end{equation}
\end{lem}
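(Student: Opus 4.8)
\emph{Plan.} The statement is purely scalar, so the first thing I would do is strip away the dimension and work only with $\sigma_n^2=\mathrm{Tr}(\Sigma_n)$. By \eqref{CETRGRAM} it satisfies $\dE[\sigma_{n+1}^2\mid\cF_n]=(1+\tfrac{b}{n})\sigma_n^2$ with $b=1-r$, exactly as in the one-dimensional case. Since every matrix $A_{n+1}$ in \eqref{DEFMATRIXA} is either a signed permutation matrix or $0$, we have $\|X_k\|^2\in\{0,1\}$, so $\sigma_n^2\in\dN$, $\sigma_1^2=1$, and $\sigma_{n+1}^2=\sigma_n^2+B_{n+1}$ with $B_{n+1}\in\{0,1\}$; taking the trace in \eqref{CVARMERWS} gives $\dE[B_{n+1}\mid\cF_n]=\tfrac{b}{n}\sigma_n^2$. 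As $(N_n)$ is a nonnegative martingale with $\dE[N_n]=\dE[N_1]=1$, the martingale convergence theorem already yields almost sure convergence to a finite limit $N\ge0$. The entire difficulty is to control the moments of $N_n$, which amounts to analysing the \emph{nonlinear} recurrence induced by \eqref{CETRGRAM} on $\dE[(\sigma_n^2)^m]$.

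The key idea I would use is to diagonalize the one-step conditional expectation by choosing the right eigenfunctions, namely the rising factorials $(x)^{(m)}=x(x+1)\cdots(x+m-1)$. The elementary identity $x\big[(x+1)^{(m)}-(x)^{(m)}\big]=m\,(x)^{(m)}$, together with $B_{n+1}\in\{0,1\}$ and $\dE[B_{n+1}\mid\cF_n]=\tfrac{b}{n}\sigma_n^2$, gives
\begin{equation*}
\dE\big[(\sigma_{n+1}^2)^{(m)}\bigm|\cF_n\big]=\Big(1+\frac{mb}{n}\Big)(\sigma_n^2)^{(m)}\qquad\text{a.s.}
\end{equation*}
Hence $c_n^{(m)}(\sigma_n^2)^{(m)}$ is a martingale, where $c_n^{(m)}=\Gamma(n)\Gamma(mb+1)/\Gamma(n+mb)$ is precisely the sequence $b_n$ of \eqref{DEFBN} with $b$ replaced by $mb$. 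Evaluating its constant expectation at $n=1$, where $(\sigma_1^2)^{(m)}=(1)^{(m)}=m!$, yields the \emph{exact} factorial moments
\begin{equation*}
\dE\big[(\sigma_n^2)^{(m)}\big]=\frac{m!}{c_n^{(m)}}=m!\,\frac{\Gamma(n+mb)}{\Gamma(n)\Gamma(mb+1)}.
\end{equation*}

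From $(\sigma_n^2)^m\le(\sigma_n^2)^{(m)}$ I then obtain $\dE[N_n^m]=b_n^m\,\dE[(\sigma_n^2)^m]\le m!\,\rho_n$ with $\rho_n=b_n^m/c_n^{(m)}$. Since $\rho_1=1$ and $\rho_{n+1}/\rho_n=(\tfrac{n}{n+b})^m\tfrac{n+mb}{n}=n^{m-1}(n+mb)/(n+b)^m\le1$ (expand $(n+b)^m$ and drop the nonnegative terms of degree $\ge 2$ in $b$), the sequence $(\rho_n)$ is nonincreasing, whence $\dE[N_n^m]\le m!$ for every $n$; this is \eqref{MOMN}. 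Boundedness in $\dL^{m+1}$ makes $\{N_n^m\}$ uniformly integrable, so by Vitali's theorem $N_n\to N$ in $\dL^m$ for every $m\ge1$, and $\dE[N_n^m]\to\dE[N^m]$. To pin down the limit I would note that $(\sigma_n^2)^{(m)}-(\sigma_n^2)^m$ is a polynomial in $\sigma_n^2$ of degree $m-1$, whose expectation is $O(n^{(m-1)b})=o(n^{mb})$ by the factorial-moment formula at lower orders; combined with $\dE[(\sigma_n^2)^{(m)}]\sim \tfrac{m!}{\Gamma(1+mb)}\,n^{mb}$ and $b_n^m\sim(\Gamma(b+1))^m n^{-mb}$, this gives $\dE[N^m]=m!(\Gamma(b+1))^m/\Gamma(1+mb)$, which is \eqref{CALCMOMN}. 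The one genuinely delicate step is the very first move of the second paragraph: recognising that the rising factorials are the eigenfunctions that turn the quadratic recurrence into a diagonal relation. Once this is in hand the bound \eqref{MOMN}, the $\dL^m$-convergence and the limiting moments \eqref{CALCMOMN} follow routinely, the only remaining estimates being the elementary inequality $n^{m-1}(n+mb)\le(n+b)^m$ and the control of the lower-order terms separating $(\sigma_n^2)^{(m)}$ from $(\sigma_n^2)^m$.
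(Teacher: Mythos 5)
Your proof is correct and complete. The paper itself gives no proof of this lemma---it defers entirely to Lemma 4.1 of \cite{Bercu2022}---and your argument is essentially the standard one behind that result: since $\sigma_{n+1}^2-\sigma_n^2$ is conditionally Bernoulli with parameter $\tfrac{b}{n}\sigma_n^2$, the rising factorials diagonalize the recursion via $\dE\big[(\sigma_{n+1}^2)^{(m)}\mid\cF_n\big]=\big(1+\tfrac{mb}{n}\big)(\sigma_n^2)^{(m)}$, which yields the exact factorial moments and hence the Mittag--Leffler moments $m!/\Gamma(1+mb)$ in the limit; the auxiliary steps (the identity $x[(x+1)^{(m)}-(x)^{(m)}]=m(x)^{(m)}$, the monotonicity of $\rho_n=b_n^m/c_n^{(m)}$ giving $\dE[N_n^m]\le m!$, uniform integrability from the $\dL^{m+1}$ bound, and the $O(n^{(m-1)b})=o(n^{mb})$ control of $(\sigma_n^2)^{(m)}-(\sigma_n^2)^m$) all check out.
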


\noindent
We are now in position to prove Lemma \ref{L-ML}.
\vspace{1ex} \\
{\bf Proof of Lemma \ref{L-ML}.} According to \eqref{DEFBN}, we have
\begin{equation}
\label{LIMBN}
\lim_{n \rightarrow \infty} n^b b_n=\Gamma(b+1).
\end{equation}
Consequently, we deduce from Lemma \ref{L-MARTN} together with  \eqref{DEFNN}
and \eqref{LIMBN} that it exists a finite random variable 
$$\Sigma=\frac{N}{\Gamma(b+1)}$$ 
such that
\begin{equation}
\label{LIMSIGMAN2}
\lim_{n \rightarrow \infty} \frac{1}{n^b} \sigma_n^2 =\Sigma \hspace{1cm} \text{a.s.}
\end{equation}
Moreover, we have from \eqref{CALCMOMN} and \eqref{LIMBN} that 
for all integer $m\geq 1$,
$$
\dE[\Sigma^m]=\frac{\dE[N^m]}{( \Gamma(b+1))^m}=\frac{m!}{\Gamma(1+mb)}.
$$
The moments of $\Sigma$ are those of the $\cM \cL(b)$ distribution given by 
\eqref{DEFMLMOM}. As the Mittag-Leffler distribution is characterized by its moments, it means
that the random variable $\Sigma$ has a $\cM \cL(b)$ distribution. Therefore, as $b=1-r$, 
convergence \eqref{LIMSIGMAN2} immediately leads to \eqref{ASCVGMLTR}. Moreover, \eqref{MLMOM} follows from 
\eqref{LIMBN} and together with the fact that $(N_n)$ converges in $\dL^m$ for any integer $m\geq 1$. 
We now carry on with the proof of convergence \eqref{ASCVGML} which is much more
difficult to establish.
We have from \eqref{CEGRAM} that for all $1 \leq i \leq d$ and for all $n \geq 1$,
\begin{align}
  \sigma_{n+1}^2(i) & =  \sigma_{n+1}^2(i) - \dE[\sigma_{n+1}^2(i)| \cF_n] +\dE[\sigma_{n+1}^2(i)| \cF_n] \hspace{1cm} \text{a.s.} \nonumber \\
  &=\left(1+\frac{a}{n}\right)\sigma_n^2(i)+\frac{2q}{n}\sigma_{n}^2 + \sigma_{n+1}^2(i) - \dE[\sigma_{n+1}^2(i)| \cF_n] \hspace{1cm} \text{a.s.} \nonumber \\
  &=\alpha_n\sigma_n^2(i)+\frac{2q}{n}\sigma_{n}^2 + \xi_{n+1}(i) 
   \hspace{1cm} \text{a.s.}
\label{DECMARTSI1}
\end{align}
where $\xi_{n+1}(i)$ reduces to $\xi_{n+1}(i) = X_{n+1}^2(i) - \dE[X_{n+1}^2(i)| \cF_n]$. Hence, it follows from \eqref{DEFAN} and \eqref{DECMARTSI1} that for all $1 \leq i \leq d$ and for all $n\geq 2$,
\begin{equation}
\label{DECMARTSI2}
\sigma_n^2(i)= \frac{1}{a_n} \left(X_1^2(i)+2q \sum_{k=1}^{n-1} a_{k+1} \frac{\sigma_k^2}{k}  +N_n(i) \right)
\end{equation}
where
$$
N_n(i)= \sum_{k=2}^n a_k \xi_k(i).
$$
Hereafter, it is necessary to study the asymptotic behavior of the sequence $(N_n(i))$. We clearly have
$N_{n+1}(i)=N_n(i)+a_{n+1} \xi_{n+1}(i)$. Consequently, 
$$\dE[ N_{n+1}(i) |\cF_n]=
N_n(i) + a_{n+1} \dE[\xi_{n+1}(i) | \cF_n]=N_n(i) \hspace{1cm} \text{a.s.}
$$
which means that
$(N_n(i))$ is a square-integrable real martingale. Its predictable quadratic variation $\langle N(i) \rangle_n$
is given by
$$
\langle N(i) \rangle_n= \sum_{k=1}^{n-1}  a_{k+1}^2 \dE[\xi_{k+1}^2(i) | \cF_k].
$$ 
However, for all $1 \leq i \leq d$, the conditional distribution of $X_{n+1}^2(i)$ given $\cF_n$ is the Bernoulli
$\mathcal{B}(p_n(i))$ distribution where
$$
p_n(i)= \dE[X_{n+1}^2(i) | \cF_n]= \frac{a}{n}\sigma_n^2(i)+\frac{2q}{n}\sigma_{n}^2\leq 
\Big(\frac{p+q}{n}\Big)\sigma_{n}^2.
$$
It implies that $\dE[\xi_{n+1}^2(i) | \cF_n]=p_n(i)(1-p_n(i))$ a.s. Therefore, we obtain that for all $1 \leq i \leq d$, $\langle N(i) \rangle_n \leq (p+q) w_{n}$ a.s. 
where
\begin{equation}
\label{DEFWN}
w_n=\sum_{k=1}^{n} \frac{a_{k}^2 \sigma_k^2}{k}.
\end{equation}
Furthermore, let $(v_n)$ be the sequence defined, for all $n \geq 1$, by
\begin{equation}
\label{DEFVN}
v_n=\sum_{k=1}^{n} \frac{a_{k}^2}{kb_k}.
\end{equation}
We already saw in Section \ref{S-I} that the MERWS shows three regimes depending on the location of the memory parameter $p$ with respect to the
critical value $p_{d,r}$ given by \eqref{DEFPDR}. Since $b=1-r$, one can easily see that
$$
p<p_{d,r} \Longleftrightarrow 2a<b, \hspace{1cm}p=p_{d,r} \Longleftrightarrow 2a=b, \hspace{1cm}p>p_{d,r} \Longleftrightarrow 2a>b.
$$
Moreover, by taking the trace on both sides of \eqref{CALCIPM}, one can observe from \eqref{DEFB} and
\eqref{DEFWN} that we always have
$\text{Tr}(\langle M \rangle_n) \leq 1 +bw_n$.
In the diffusive regime where $2a<b$, we obtain from \eqref{LIMAN} and \eqref{LIMBN} that
\begin{equation}
\label{LIMVNDR}
\lim_{n \rightarrow \infty} \frac{v_n}{n^{b-2a}}=\ell_D
\hspace{1cm}\text{where}\hspace{1cm}
\ell_D=\frac{1}{(b-2a)}\frac{\Gamma^2(a+1)}{\Gamma(b+1)}.
\end{equation}
Then, we deduce from \eqref{LIMSIGMAN2} together with \eqref{DEFWN}, \eqref{DEFVN} and Toeplitz's lemma that
\begin{equation*}
\lim_{n \rightarrow \infty} \frac{w_n}{v_n}=\Gamma(b+1) \Sigma  \hspace{1cm} \text{a.s.}
\end{equation*}
which leads via \eqref{LIMVNDR} to
\begin{equation}
\label{CVGWNDR}
\lim_{n \rightarrow \infty} \frac{w_n}{n^{b-2a}}
= \frac{\Gamma^2(a+1)}{(b-2a)}\Sigma  \hspace{1cm} \text{a.s.}
\end{equation}
Consequently, it follows from \eqref{CVGWNDR} and the standard strong law of large numbers for martingales given e.g. by Theorem 1.3.24 in \cite{Duflo1997} that for all $1 \leq i \leq d$,
\begin{equation}
\label{ASCVGNID}
(N_n(i))^2=O(w_n \log w_n)=O(n^{b-2a} \log n) \hspace{1cm} \text{a.s.}
\end{equation}
Moreover, in the critical regime where $2a=b$, we find from \eqref{LIMAN} and \eqref{LIMBN} that
\begin{equation}
\label{LIMVNCR}
\lim_{n \rightarrow \infty} \frac{v_n}{\log n}=\ell_C
\hspace{1cm}\text{where}\hspace{1cm}
\ell_C=\frac{\Gamma^2(a+1)}{\Gamma(2a+1)}.
\end{equation}
Hence, as before, Toeplitz's lemma ensures that
\begin{equation}
\label{CVGWNCR}
\lim_{n \rightarrow \infty} \frac{w_n}{\log n}
= \Gamma^2(a+1)\Sigma  \hspace{1cm} \text{a.s.}
\end{equation}
Consequently, we obtain once again from the standard strong law of large numbers for martingales together with \eqref{CVGWNCR}
that for all $1 \leq i \leq d$,
\begin{equation}
\label{ASCVGNIC}
(N_n(i))^2=O(w_n \log w_n)=O( \log n \log \log n) \hspace{1cm} \text{a.s.}
\end{equation}
Finally, in the superdiffusive regime where $2a>b$, $(v_n)$ converges to the finite value 
\begin{equation} 
\label{LIMVNSR}
\lim_{n \rightarrow \infty}  v_n \!=\!
\sum_{k=0}^\infty \! \left(\frac{\Gamma(a+1)\Gamma(k+1)}{\Gamma(k+a+1)} \right)^{\!2}\!\!
\frac{\Gamma(k+b+1)}{\Gamma(k+2)\Gamma(b+1)}  \!=\! 
{}_{4}F_3 \!\left(\! \begin{matrix}
{\hspace{0.1cm}1 , 1 , 1 ,b+1}\\
{2,a+1,a+1}\end{matrix} \Big|
{\displaystyle 1}\!\right)\!
\end{equation} 
where $\!{}_{4}F_3$ stands for the hypergeometric function defined, for all $z \in \dC$, by
\begin{equation}
{}_{4}F_3 \left( \begin{matrix}
{a,b,c,d}\\
{e,f,g}\end{matrix} \Bigl|
{\displaystyle z}\right)
=\sum_{k=0}^{\infty}
\frac{(a)^{(k)}\, (b)^{(k)}\, (c)^{(k)} \,(d)^{(k)}}
{(e)^{(k)}\,(f)^{(k)}\,(g)^{(k)}\, k!} z^k.
\notag
\end{equation}
However, we already saw in Lemma \ref{L-MARTN} that the sequence $(N_n)$ is a martingale such that
for all $n \geq 1$, $\dE[N_n]=\dE[b_n \sigma_n^2]=1$. It implies that for all $n\geq 1$, $\dE[w_n]=v_n$.
Therefore, we deduce from the monotone convergence theorem that
$$
\sup_{n \geq 1} \dE[w_n] < \infty.
$$
Hence, we obtain that for all $1 \leq i \leq d$, 
$$\sup_{n \geq 1}\dE[N_n^2(i)] = \sup_{n \geq 1}\dE[\langle N(i) \rangle_n] \leq (p+q)\sup_{n \geq 1} \dE[w_n]
< \infty.
$$
Consequently, $(N_n(i))$ is a martingale bounded in $\dL^2$ and it follows from Doob's
martingale convergence theorem given e.g. by Corollary 2.2 in \cite{Hall1980}
that there exists a square integrable random variable $N(i)$ such that
\begin{equation}
\label{ASCVGNISD}
\lim_{n\rightarrow \infty} N_n(i)=N(i) \hspace{1cm} \text{a.s.}
\end{equation}
In the three regimes, we find from \eqref{ASCVGNID},  \eqref{ASCVGNIC} and  \eqref{ASCVGNISD} that for all $1 \leq i \leq d$, 
\begin{equation}
\label{LIMASCVGNI}
\lim_{n\rightarrow \infty} \frac{1 }{n^{b-a}}N_n(i)=0 \hspace{1cm} \text{a.s.}
\end{equation}
Moreover, we obtain from \eqref{LIMSIGMAN2} and Toeplitz's lemma that 
\begin{equation}
\label{TOPNI}
\lim_{n\rightarrow \infty} \frac{1 }{n^{b-a}} \sum_{k=1}^{n}  \frac{a_{k} \sigma_k^2}{k} = 
\frac{\Gamma(a+1)}{(b-a)}\Sigma 
\hspace{1cm} \text{a.s.}
\end{equation}
Therefore, as $b-a=2dq$, it follows from \eqref{DECMARTSI2} together with the almost sure convergences \eqref{LIMASCVGNI} and \eqref{TOPNI} that
for all $1 \leq i \leq d$,
\begin{equation}
\label{LIMSIGMANI}
\lim_{n\rightarrow \infty} \frac{1 }{n^{b}} \sigma_n^2(i) = \frac{1}{d}\Sigma 
\hspace{1cm} \text{a.s.}
\end{equation} 
Hence, we immediately obtain from \eqref{LIMSIGMANI} and the diagonal decomposition \eqref{GRAMDIAG} of the Gram matrix $\Sigma_n$ that
\begin{equation}
\label{LIMGRAM}
\lim_{n\rightarrow \infty} \frac{1 }{n^{b}} \Sigma_n = \frac{1}{d}\Sigma I_d
\hspace{1cm} \text{a.s.}
\end{equation} 
which completes the proof of Lemma \ref{L-ML}.
\demend


\vspace{-1ex}
\section*{Appendix A \\ Proofs in the diffusive regime}
\renewcommand{\thesection}{\Alph{section}}
\renewcommand{\theequation}{\thesection.\arabic{equation}}
\setcounter{section}{1}
\setcounter{equation}{0}
\label{S-A}


\subsection{Almost sure convergence.}
We start with the proof of the almost sure convergence in the diffusive regime where $2a<b$.

\ \vspace{-1ex}\\
\noindent{\bfseries Proof of Theorem \ref{T-ASCVG-DR}.}
It follows from \eqref{CALCIPM} and \eqref{DEFWN} that
$$
\text{Tr}(\langle M \rangle_n) =O(w_n) \hspace{1cm} \text{a.s.} 
$$
Then, the almost sure convergence \eqref{CVGWNDR} implies that $\text{Tr}(\langle M \rangle_n)=O(n^{b-2a})$ a.s.
Consequently, we deduce from the strong law of large numbers for multidimensional martingales given by the last part of Theorem 4.3.16 in \cite{Duflo1997} that 
\begin{equation}
\label{MNDR}
\| M_n\|^2=O(n^{b-2a} \log n) \hspace{1cm} \text{a.s.} 
\end{equation}
Therefore, as $M_n = a_nS_n$, we have from \eqref{LIMAN} and \eqref{MNDR} that
$\|S_n\|^2=O(n^{b} \log n)$ a.s. Finally, as $b=1-r<1$, we clearly obtain \eqref{ASCVGDR}
and \eqref{ASCVGDRSHARP}, which completes the proof of Theorem \ref{T-ASCVG-DR}.
\demend

\vspace{-2ex}
\subsection{Law of iterated logarithm.}
We now proceed to the proof of the law of iterated logarithm in the diffusive regime where $2a<b$.

\ \vspace{-1ex}\\
\noindent{\bfseries Proof of Theorem \ref{T-LIL-DR}.}
On the one hand, it follows from \eqref{LIMAN} and \eqref{LIMBN} together with
convergence \eqref{LIMGRAM} and Toeplitz's lemma that
\begin{equation}
\label{PLILDR1}
\lim_{n \rightarrow \infty} \frac{1}{n^{b-2a}}
\sum_{k=1}^{n} \left(\frac{a_{k}^2}{k} \right)\Sigma_k
= \frac{\Gamma^2(a+1)}{d(b-2a)}\Sigma I_d  \hspace{1cm} \text{a.s.} 
\end{equation}
By taking the trace on both sides of \eqref{PLILDR1}, we also get that
\begin{equation}
\label{PLILDR2}
\lim_{n \rightarrow \infty} \frac{1}{n^{b-2a}}
\sum_{k=1}^{n} \frac{a_{k}^2 \sigma_k^2}{k} 
= \frac{\Gamma^2(a+1)}{(b-2a)}\Sigma  \hspace{1cm} \text{a.s.}  
\end{equation}
On the other hand, we obtain from \eqref{ASCVGDRSHARP} that
\begin{equation*}
\lim_{n \rightarrow \infty} \frac{\|S_n\|^2}{n^{1+b}}
=0 \hspace{1cm} \text{a.s.}  
\end{equation*}
Hence, we get from \eqref{LIMAN}, \eqref{LIMBN} and Toeplitz's lemma that
\begin{equation}
\label{PLILDR3}
\lim_{n \rightarrow \infty} \frac{1}{n^{b-2a}}
\sum_{k=1}^{n} \left(\frac{a_{k}^2}{k} \right) S_kS_k^T
= 0  \hspace{1cm} \text{a.s.} 
\end{equation}
Consequently, we deduce from the conjunction of \eqref{CALCIPM} and \eqref{CVGWNDR} together with \eqref{PLILDR1}, \eqref{PLILDR2} and \eqref{PLILDR3} that
\begin{equation}
\label{CVGIPMNDR}
\lim_{n \rightarrow \infty} \frac{1}{w_n}\langle M \rangle_n= \frac{b}{d}I_d
\hspace{1cm}\text{a.s.}
\end{equation}
Hereafter, we already saw from \eqref{CEMERWS} and \eqref{CEPOSMERWS} that
\begin{equation}
\label{CMA1}
\dE[X_{n+1}| \cF_n] = \frac{a}{n}S_n  \hspace{1cm} \text{and}  \hspace{1cm}
\dE[S_{n+1}| \cF_n] = \left(1+\frac{a}{n}\right)S_n  \hspace{1cm} \text{a.s.}
\end{equation}
In addition, by taking the trace on both sides of \eqref{CVARMERWS} and \eqref{CVARSN}, we obtain that 
\begin{align}
\label{CNORMA2}
\dE[\|X_{n+1}\|^2 | \cF_n] &= \frac{b\sigma_n^2}{n} \hspace{1cm} \text{a.s.}  \\
\dE[\|S_{n+1}\|^2 | \cF_n] &= \left(1+\frac{2a}{n}\right)\|S_n\|^2 + \frac{b\sigma_n^2}{n}  \hspace{1cm} \text{a.s.}
\label{CNORMA3}
\end{align}
Moreover, it is easy to see from \eqref{STEPSMERWS} that $\dE[X_{n+1}\|X_{n+1}\|^2 | \cF_n] = \dE[X_{n+1}| \cF_n]$ and 
$\dE[\|X_{n+1}\|^4 | \cF_n] = \dE[\|X_{n+1}\|^2 | \cF_n]$ almost surely. 
Therefore, it follows from \eqref{CVARMERWS}, \eqref{CVARSN}, \eqref{CMA1}, \eqref{CNORMA2} and \eqref{CNORMA3} together with tedious but straightforward calculations that
\begin{eqnarray}
\dE[\|S_{n+1}\|^2 S_{n+1} | \cF_n] &=& \left(1+\frac{3a}{n}\right)\|S_n\|^2 S_n +\frac{a}{n}S_n  - \frac{2a \sigma_n^2}{n}S_n\nonumber \\
& + & \frac{2a}{n} \Sigma_n S_n   + \frac{3b\sigma_n^2}{n}S_n   
\hspace{1cm} \text{a.s.} 
\label{CM3A3}
\end{eqnarray}
and
\begin{eqnarray}
\dE[\|S_{n+1}\|^4 | \cF_n] &=& \left(1+\frac{4a}{n}\right)\|S_n\|^4 + \frac{4a}{n} \Big(S_n^T\Sigma_n S_n + \|S_n\|^2- \sigma_n^2 \|S_n\|^2\Big)
 \nonumber \\
& + &  \frac{b \sigma_n^2}{n} (1+6 \|S_n\|^2)
\hspace{1cm} \text{a.s.} 
\label{CM4A4}
\end{eqnarray}
Hence, as $\varepsilon_{n+1}=S_{n+1} - \alpha_n S_n$, \eqref{CM3A3} and \eqref{CM4A4} lead to
\begin{eqnarray}
\dE[\|\varepsilon_{n+1}\|^2 \varepsilon_{n+1} | \cF_n] &=& 2\Big(\frac{a}{n}\Big)^3 \|S_n\|^2 S_n +\frac{a}{n}S_n  + 2\Big(\frac{a}{n}\Big)^2\sigma_n^2 S_n \nonumber \\
& - & 2\Big(\frac{a}{n}\Big)^2 \Sigma_n S_n -  \frac{ 3ab \sigma_n^2}{n^2}S_n 
\hspace{1cm} \text{a.s.} 
\label{CM3EPS}
\end{eqnarray}
and
\begin{eqnarray}
\dE[\|\varepsilon_{n+1}\|^4|\cF_n] &=& \frac{b \sigma_n^2}{n} \left(1+ 6\left(\frac{a\|S_n\|}{n}\right)^2 \right) -3\left(\frac{a\|S_n\|}{n}\right)^4 -4\left(\frac{a\|S_n\|}{n}\right)^2 
 \nonumber \\
& + &  -4\Big(\frac{a}{n}\Big)^3 \Big( \sigma_n^2 \|S_n\|^2-    S_n^T\Sigma_n S_n\Big) 
\hspace{1cm} \text{a.s.} 
\label{CM4EPS}
\end{eqnarray}
Consequently, we deduce from \eqref{CM4EPS} that for all $n\geq 1$,
\begin{equation}
\label{CMOMEPS4}
\dE[\|\varepsilon_{n+1}\|^4|\cF_n] \leq \frac{7b \sigma_n^2}{n} \hspace{1cm} \text{a.s.}
\end{equation}
By taking the expectation on both sides of \eqref{CMOMEPS4}, we obtain that for all $n\geq 1$, 
\begin{equation}
\label{MOM4EPS}
 \dE[\|\varepsilon_{n+1}\|^4]\leq \frac{7b}{n b_n} \leq \frac{7n^b}{\Gamma(b) n}
\end{equation}
where the last upper bound is due to \eqref{DEFBN} and to Wendel's inequality for the ratio of gamma functions.
\begin{equation}
\label{UBMOM4EPS}
 \dE[||\varepsilon_{n+1}||^4 ]\leq \frac{7}{\Gamma(b)n^{1-b}}.
\end{equation}
We are now in position to prove Theorem \ref{T-LIL-DR}. For any vector $u \in \dR^d$, denote $M_n(u)= \langle u, M_n \rangle$, 
$\varepsilon_n(u)= \langle u, \varepsilon_n \rangle$ and $\Delta M_n(u)= a_n \varepsilon_n(u)$. 
We immediately find from the almost sure convergence \eqref{CVGIPMNDR} that 
\begin{equation}
\label{CVGIPMNDRU}
\lim_{n \rightarrow \infty} \frac{1}{w_n}\langle M(u) \rangle_n=  \frac{b}{d} \|u\|^2
\hspace{1cm}\text{a.s.}
\end{equation}
Moreover, it follows from \eqref{UBMOM4EPS} and the Cauchy-Schwarz inequality that
\begin{equation}
\label{BSUMDR}
\sum_{n=2}^\infty \frac{1}{n^{2(b-2a)}} \dE[|\Delta M_n(u)|^4] 
\leq 
\frac{7 \|u\|^4}{\Gamma(b)}\sum_{n=1}^\infty \frac{a_{n}^4}{n^{b+1-4a}} <\infty.
\end{equation}
Furthermore, let $(P_n(u))$ be the martingale defined, for all $n \geq 1$, by
$$
P_n(u)=\sum_{k=1}^n \frac{a_k^2}{k^{b-2a}} \Big(\varepsilon_k^2(u) - \dE[\varepsilon_k^2(u) | \cF_{k-1}]\Big).
$$
Its predictable quadratic variation is given by
$$
\langle P(u) \rangle_n = \sum_{k=1}^n \frac{a_k^4}{k^{2(b-2a)}}
\Big(\dE[\varepsilon_k^4(u)| \mathcal{F}_{k-1}]- \dE^2[\varepsilon_k^2(u) | \cF_{k-1}]\Big).
$$
Hence, we obtain from \eqref{BSUMDR} that $(P_n(u))$ is bounded in $\dL^2$ as
\begin{equation*}
\sup_{n \geq 1} \dE\big[\langle P(u) \rangle_n \big] < \infty.
\end{equation*}
Consequently, we deduce from Doob's martingale convergence theorem \cite{Hall1980}
that $(P_n(u))$ converges a.s.
Then, it follows from the law of iterated logarithm given by Theorem 1 and Corollary 2 in \cite{Heyde1977} that for any vector $u \in \dR^d$, 
\begin{eqnarray}
 \limsup_{n \rightarrow \infty} 
\left(\frac{1}{2w_n \log \log w_n}\right)^{1/2} M_n(u) 
  & = &
 -\liminf_{n \rightarrow \infty} 
 \left(\frac{1}{2w_n \log \log w_n}\right)^{1/2} M_n(u)  \nonumber \\
& = & \left(\frac{b}{d} \right)^{\!1/2}\| u \|\hspace{1cm} \text{a.s.}
\label{PLILDR4}
\end{eqnarray}
Therefore, as $M_n(u)=a_n \langle u, S_n\rangle$, we obtain from \eqref{LIMAN}, \eqref{LIMSIGMAN2}, \eqref{CVGWNDR}  and \eqref{PLILDR4} that
\begin{eqnarray}
 \limsup_{n \rightarrow \infty} 
\left(\frac{1}{2 \sigma_n^2 \log \log \sigma_n^2}\right)^{1/2} \langle u, S_n\rangle 
  & = &
 -\liminf_{n \rightarrow \infty} 
 \left(\frac{1}{2 \sigma_n^2 \log \log \sigma_n^2}\right)^{1/2} \langle u, S_n\rangle   \nonumber \\
& = & \left(\frac{b}{d(b-2a)} \right)^{\!1/2}\| u \|\hspace{1cm} \text{a.s.}
\label{PLILDR5}
\end{eqnarray}
In particular, we get from \eqref{PLILDR5} that for any vector $u$ of $\dR^d$,
\begin{equation}
\label{PLIL-SNU-DR}
 \limsup_{n \rightarrow \infty} \frac{\langle u, S_n\rangle^2}{2 \sigma_n^2 \log \log \sigma_n^2}
  = \frac{b}{d(b-2a)}\|u\|^2 \hspace{1cm} \text{a.s.}
\end{equation}
However, we have the decomposition
$$
\| S_n \|^2=\sum_{i=1}^d \langle e_i, S_n\rangle^2
$$
where $(e_1, \ldots, e_d)$ is the standard basis of $\dR^d$. Finally, we deduce from
\eqref{PLIL-SNU-DR} that
\begin{equation}
\label{PLIL-SN-DR}
 \limsup_{n \rightarrow \infty} \frac{ \|S_n\|^2}{2 \sigma_n^2 \log \log \sigma_n^2}
  = \frac{b}{(b-2a)} \hspace{1cm} \text{a.s.}
\end{equation}
which clearly leads to \eqref{LIL-DR1} as it comes from \eqref{VARDR} and the elementary fact that $a=p-q$ and $b=1-r$ that
\begin{equation}
\label{PVAR-DR}
\frac{b}{(b-2a)}=\frac{(2d-1)b}{(2d+1)b-4dp}=dv^2.
\end{equation}
We find \eqref{LIL-DR2} from \eqref{LIMSIGMAN2} 
and \eqref{PLIL-SN-DR} which completes the proof of Theorem \ref{T-LIL-DR}. \demend

\vspace{-2ex}
\subsection{Asymptotic normality.}
We continue with the proof of the asymptotic normality in the diffusive regime where $2a<b$
using the Cram\'er-Wold theorem.

\ \vspace{-1ex}\\
\noindent{\bfseries Proof of Theorem \ref{T-AN-DR}.} 
We already saw that the almost sure convergence \eqref{CVGIPMNDRU} holds for any vector $u \in \dR^d$
and that $(P_n(u))$ converges a.s. In order to make use of
Theorem 1 and Corollaries 1 and 2 in \cite{Heyde1977}, it only remains to show that for any vector $u \in \dR^d$ and
that for any $\eta>0$,
\begin{equation}
\lim_{n \rightarrow \infty}
\frac{1}{n^{b-2a}}\sum_{k=1}^{n}\dE\Big[\Delta M_k^2(u) \rI_{\big\{|\Delta M_k(u)|>\eta \sqrt{n^{b-2a}} \big \}}\Big]=0.
\label{PANDR1}
\end{equation}
We have for any $\eta>0$,
\begin{equation*}
\frac{1}{n^{b-2a}}\sum_{k=1}^{n}\dE\Big[\Delta M_k^2(u) \rI_{\big\{|\Delta M_k(u)|>\eta \sqrt{n^{b-2a}} \big \}}\Big]
\leq 
\frac{1}{\eta^2 n^{2(b-2a)}}\sum_{k=1}^{n}\dE\Big[\Delta M_k^4(u)\Big].
\end{equation*}
However, it follows from Kronecker's lemma and \eqref{BSUMDR} that for any vector $u \in \dR^d$,
\begin{equation*}
\lim_{n \rightarrow \infty}
\frac{1}{n^{2(b-2a)}}\sum_{k=1}^{n}\dE\Big[\Delta M_k^4(u)\Big]=0
\end{equation*}
which clearly implies \eqref{PANDR1}. Hence, all the conditions of Theorem 1 and Corollaries 1 and 2 in \cite{Heyde1977}
are satisfied, which leads to the asymptotic normality
\begin{equation}
\label{PANDR2}
\frac{M_n(u)}{\sqrt{w_n}} \underset{n\rightarrow+\infty}{\overset{\cL}{\longrightarrow}} \cN\Big(0, \frac{b}{d}\|u\|^2\Big).
\end{equation}
Therefore, as $M_n(u)=a_n \langle u, S_n\rangle$,  we obtain from \eqref{LIMAN}, \eqref{LIMSIGMAN2}, \eqref{CVGWNDR}  and \eqref{PANDR2} that
\begin{equation}
\label{PANDR3}
\frac{\langle u, S_n\rangle}{\sqrt{\sigma^2_n}} \underset{n\rightarrow+\infty}{\overset{\cL}{\longrightarrow}} \cN\Big(0, \frac{b}{d(b-2a)}\|u\|^2\Big).
\end{equation}
Consequently, we deduce the asymptotic normality \eqref{ANDR} from \eqref{PVAR-DR} and \eqref{PANDR3} together with the Cram\'er-Wold theorem.
Moreover, we also find from Theorem 1 in \cite{Heyde1977} that for any vector $u \in \dR^d$,
\begin{equation}
\label{PANDR4}
\frac{M_n(u)}{\sqrt{n^{b-2a}}} \underset{n\rightarrow+\infty}{\overset{\cL}{\longrightarrow}} \Gamma(a+1) \sqrt{\Sigma^\prime}\cN\Big(0, \frac{b}{d(b-2a)}\|u\|^2\Big)
\end{equation}
where $\Sigma^\prime$ is independent of the Gaussian distribution at the right-hand side of \eqref{PANDR4}
and $\Sigma^\prime$ has a $\cM\cL(b)$ distribution. Finally, it follows from \eqref{LIMAN} and \eqref{PANDR4} that
\begin{equation*}
\frac{\langle u, S_n\rangle}{\sqrt{n^b}} \underset{n\rightarrow+\infty}{\overset{\cL}{\longrightarrow}} \sqrt{\Sigma^\prime}\cN\Big(0, \frac{b}{d(b-2a)}\|u\|^2\Big)
\end{equation*}
which implies \eqref{ANDRML} and achieves the proof of Theorem \ref{T-AN-DR}.
\demend


\vspace{-2ex}
\section*{Appendix B \\ Proofs in the critical regime}
\renewcommand{\thesection}{\Alph{section}}
\renewcommand{\theequation}{\thesection.\arabic{equation}}
\setcounter{section}{2}
\setcounter{equation}{0}
\setcounter{subsection}{0}
\label{S-B}


\subsection{Almost sure convergence.}
We carry on with the proof of the almost sure convergence in the critical regime where $2a=b$. 

\ \vspace{-1ex}\\
\noindent{\bfseries Proof of Theorem \ref{T-ASCVG-CR}.}
We already saw from \eqref{CALCIPM} and \eqref{DEFWN} that
$$
\text{Tr}(\langle M \rangle_n) =O(w_n) \hspace{1cm} \text{a.s.} 
$$
The almost sure convergence \eqref{CVGWNCR} implies that $\text{Tr}(\langle M \rangle_n)=O(\log n)$ a.s.
Therefore, we obtain once again from the strong law of large numbers for multidimensional martingales given by the last part of Theorem 4.3.16 in \cite{Duflo1997} that 
\begin{equation}
\label{MNCR}
\| M_n\|^2=O(\log n \log\log n) \hspace{1cm} \text{a.s.} 
\end{equation}
Consequently, as $M_n = a_nS_n$ and due to the fact that $2a=b$, we find from \eqref{LIMAN} 
and \eqref{MNCR} that $\|S_n\|^2=O(n^{b} \log n \log \log n)$ a.s.  Hence, as $b=1-r<1$, it clearly leads to \eqref{ASCVGCR}
and \eqref{ASCVGCRSHARP}, which completes the proof of Theorem \ref{T-ASCVG-CR}.
\demend

\vspace{-2ex}
\subsection{Law of iterated logarithm.} 
We now focus on the proof of the law of iterated logarithm in the critical regime where $2a=b$. 

\ \vspace{-1ex}\\
\noindent{\bfseries Proof of Theorem \ref{T-LIL-CR}.}
The proof is quite similar to that of Theorem \ref{T-LIL-DR}. We have from \eqref{LIMAN}, \eqref{LIMBN}, \eqref{LIMGRAM} and Toeplitz's lemma that
\begin{equation}
\label{PLILCR1}
\lim_{n \rightarrow \infty} \frac{1}{\log n}
\sum_{k=1}^{n} \left(\frac{a_{k}^2}{k} \right)\Sigma_k
= \frac{\Gamma^2(a+1)}{d}\Sigma I_d  \hspace{1cm} \text{a.s.} 
\end{equation}
By taking the trace on both sides of \eqref{PLILCR1}, we also get that
\begin{equation}
\label{PLILCR2}
\lim_{n \rightarrow \infty} \frac{1}{\log n}
\sum_{k=1}^{n} \frac{a_{k}^2 \sigma_k^2}{k} 
= \Gamma^2(a+1)\Sigma  \hspace{1cm} \text{a.s.}  
\end{equation}
Moreover, it also follows \eqref{ASCVGCRSHARP}, \eqref{LIMAN}, \eqref{LIMBN} and Toeplitz's lemma that
\begin{equation}
\label{PLILCR3}
\lim_{n \rightarrow \infty} \frac{1}{\log n}
\sum_{k=1}^{n} \left(\frac{a_{k}^2}{k} \right) S_kS_k^T
= 0  \hspace{1cm} \text{a.s.} 
\end{equation}
Hence, we obtain from \eqref{CALCIPM} and \eqref{CVGWNCR} together with \eqref{PLILCR1}, \eqref{PLILCR2} and \eqref{PLILCR3} that
\begin{equation}
\label{CVGIPMNCR}
\lim_{n \rightarrow \infty} \frac{1}{w_n}\langle M \rangle_n= \frac{b}{d}I_d
\hspace{1cm}\text{a.s.}
\end{equation}
For any vector $u \in \dR^d$, let $M_n(u)= \langle u, M_n \rangle$, 
$\varepsilon_n(u)= \langle u, \varepsilon_n \rangle$ and $\Delta M_n(u)= a_n \varepsilon_n(u)$. 
The almost sure convergence \eqref{CVGIPMNCR} ensures that 
\begin{equation}
\label{CVGIPMNCRU}
\lim_{n \rightarrow \infty} \frac{1}{w_n}\langle M(u) \rangle_n=  \frac{b}{d} \|u\|^2
\hspace{1cm}\text{a.s.}
\end{equation}
In addition, we have from \eqref{UBMOM4EPS} and the Cauchy-Schwarz inequality that
\begin{equation}
\label{BSUMCR}
\sum_{n=2}^\infty \frac{1}{(\log n)^2} \dE[|\Delta M_n(u)|^4] 
\leq 
\frac{7 \|u\|^4}{\Gamma(b)}\sum_{n=2}^\infty  \frac{1}{(\log n)^2} \frac{a_{n}^4}{n^{1-b}} <\infty.
\end{equation}
Furthermore, let $(Q_n(u))$ be the martingale defined, for all $n \geq 1$, by
$$
Q_n(u)=\sum_{k=2}^n \frac{a_k^2}{\log k} \Big(\varepsilon_k^2(u) - \dE[\varepsilon_k^2(u) | \cF_{k-1}]\Big).
$$
As in the proof of Theorem \ref{T-LIL-DR}, we obtain that $(Q_n(u))$ converges a.s. Consequently, we deduce from the law of iterated logarithm given by
Theorem 1 and Corollary 2 in \cite{Heyde1977} that for any vector $u \in \dR^d$,
\begin{eqnarray*}
 \limsup_{n \rightarrow \infty} 
\left(\frac{1}{2w_n \log \log w_n}\right)^{1/2} M_n(u) 
  & = &
 -\liminf_{n \rightarrow \infty} 
 \left(\frac{1}{2w_n \log \log w_n}\right)^{1/2} M_n(u)  \nonumber \\
& = & \left(\frac{b}{d} \right)^{\!1/2}\| u \|\hspace{1cm} \text{a.s.}
\end{eqnarray*}
which leads via \eqref{LIMAN} and \eqref{LIMSIGMAN2} to \eqref{LIL-CR1}.
Finally, \eqref{LIL-CR2} follows from \eqref{LIL-CR1} and \eqref{LIMSIGMAN2} which achieves the proof of
Theorem \ref{T-LIL-CR}. \demend

\vspace{-2ex}
\subsection{Asymptotic normality.}
We proceed to the proof of the asymptotic normality in the critical regime where $2a=b$.

\ \vspace{-1ex}\\
\noindent{\bfseries Proof of Theorem \ref{T-AN-CR}.}
Via the same arguments as in the proof of Theorem \ref{T-AN-DR}, we obtain that for any vector $u \in \dR^d$, 
\begin{equation}
\label{PANCR1}
\frac{M_n(u)}{\sqrt{w_n}} \underset{n\rightarrow+\infty}{\overset{\cL}{\longrightarrow}} \cN\Big(0, \frac{b}{d}\|u\|^2\Big).
\end{equation}
Hence, as $M_n(u)=a_n \langle u, S_n\rangle$,  we find from \eqref{LIMAN}, \eqref{LIMSIGMAN2}, \eqref{CVGWNCR}  and \eqref{PANCR1} that
\begin{equation}
\label{PANCR2}
\frac{\langle u, S_n\rangle}{\sqrt{\sigma^2_n \log \sigma^2_n}} \underset{n\rightarrow+\infty}{\overset{\cL}{\longrightarrow}} \cN\Big(0, \frac{1}{d}\|u\|^2\Big).
\end{equation}
Consequently, we deduce the asymptotic normality \eqref{ANCR} from \eqref{PANCR2} together with the Cram\'er-Wold theorem. 
The proof of \eqref{ANCRML} is left to the reader inasmuch as it follows the same lines as that of \eqref{ANDRML}.
\demend


\vspace{-2ex}
\section*{Appendix C \\ Proofs in the superdiffusive regime}
\renewcommand{\thesection}{\Alph{section}}
\renewcommand{\theequation}{\thesection.\arabic{equation}}
\setcounter{section}{3}
\setcounter{equation}{0}
\setcounter{subsection}{0}
\label{S-C}

\subsection{Almost sure convergence.}
We carry on with the proof of the almost sure convergence in the superdiffusive regime where $2a>b$. 

\ \vspace{-1ex}\\
\noindent{\bfseries Proof of Theorem \ref{T-ASCVG-SR}.}
\noindent
We have from \eqref{CALCIPM} and \eqref{DEFWN} together with \eqref{LIMVNSR} that
$$
\lim_{n \rightarrow \infty} \text{Tr}(\langle M \rangle_n) < \infty \hspace{1cm} \text{a.s.} 
$$
Therefore, it follows from the second part of Theorem 4.3.15 in \cite{Duflo1997} and \eqref{NEWDEFMN} that 
\begin{equation}
\label{MNSR}
\lim_{n \rightarrow \infty} M_n=M \hspace{1cm} \text{a.s.} 
\end{equation}
where $M$ is the random vector of $\dR^d$ given by
$$
M=\sum_{k=1}^\infty a_k \veps_k.
$$
Consequently, as $M_n=a_n S_n$, we obtain from \eqref{LIMAN} and \eqref{MNSR} that
\begin{equation*}
\lim_{n \rightarrow \infty} \frac{S_n}{n^{a}}
= L  \hspace{1cm} \text{a.s.}
\end{equation*}
where the limiting random vector $L$ of $\dR^d$ given by
\begin{equation}
\label{DEFL}
L=\frac{1}{\Gamma(a+1)}\sum_{k=1}^\infty a_k \veps_k.
\end{equation}
Moreover, as $M_0=0$, we have from \eqref{NEWDEFMN}, \eqref{CALCIPM} and \eqref{DEFWN} that for all $n \geq 1$,
$$
\dE[\|M_n\|^2]=\dE[\text{Tr} (\langle M \rangle_n)] \leq 
1+b\dE[w_n] \leq 1+bv_n
$$
since $\dE[w_n]=v_n$, which leads via \eqref{LIMVNSR} to
\begin{equation}
\label{BOUNDIPMSR}
\sup_{n \geq 1} \dE\left[\|M_n\|^2\right] < \infty.
\end{equation} 
Therefore, we deduce from \eqref{BOUNDIPMSR} that the martingale $(M_n)$ is bounded in $\dL^2$.
Hence, it follows from Doob's martingale convergence theorem given e.g. by Corollary 2.2 in \cite{Hall1980} that
\begin{equation*}
\lim_{n \to \infty} \dE\bigl[\|M_n-M\|^2\bigr]=0,
\end{equation*}
which immediately implies the mean square convergence \eqref{ASCVGSR2}. In order to complete the proof of Theorem
\ref{T-ASCVG-SR}, it only remains to compute the mean and the covariance matrix associated with $L$.
We clearly have for all $n \geq 1$, $\dE[M_n]=0$, which ensures that 
$\dE[M]=0$ and $\dE[L]=0$. Moreover, by taking the expectation on both sides of \eqref{CVARSN}, we obtain
that for all $n\geq 1$, 
\begin{equation}
\label{CALCSNSNT}
   \dE[S_{n+1}S_{n+1}^T] =\left(1+\frac{2a}{n}\right) \dE[S_nS_n^T] + \frac{a}{n}\dE[\Sigma_n] + \frac{2q}{n}\dE[\sigma_n^2] I_d.
\end{equation}
On the one hand, we already saw in the proof of Lemma \ref{L-ML} that $(N_n)$ is a martingale such that
for all $n \geq 1$, $\dE[N_n]=\dE[b_n \sigma_n^2]=1$, which implies via \eqref{DEFBN} that 
\begin{equation}
\label{CALCMSIGMAN2}
\dE[\sigma_n^2]=\frac{1}{b_n}= \frac{(b)^{(n)}}{ b(n-1)!}.
\end{equation}
On the other hand, by taking the expectation on both sides of \eqref{DECMARTSI2}, we obtain from 
\eqref{CALCMSIGMAN2} that for all $1 \leq i \leq d$ and for all $n\geq 2$,
\begin{align}
\label{CALCMSIGMANI21}
\dE[\sigma_n^2(i)] &= \frac{1}{a_n} \left(\dE[X_1^2(i)]+2q \sum_{k=1}^{n-1} a_{k+1} \frac{\dE[\sigma_k^2]}{k} \right) \nonumber \\
&=\frac{1}{a_n} \left(\frac{1}{d}+\frac{2q}{b} \sum_{k=1}^{n-1} a_{k+1} \frac{(b)^{(k)}}{k!} \right).
\end{align}
Furthermore, it follows from \eqref{DEFAN} together with Lemma B.1 in \cite{Bercu2018} that
\begin{equation}
\label{CALCMSIGMANI22}
\sum_{k=1}^{n-1} a_{k+1} \frac{(b)^{(k)}}{k!} = \sum_{k=1}^{n-1}  \frac{a (b)^{(k)}}{(a)^{(k+1)}}
= \sum_{k=1}^{n-1}  \frac{(b)^{(k)}}{(a+1)^{(k)}} 
=\frac{b}{(b-a)}\left( \frac{a (b)^{(n)}}{b (a)^{(n)}} -1\right).
\end{equation}
Consequently, as $b-a=2dq$, we find from \eqref{CALCMSIGMANI21} and \eqref{CALCMSIGMANI22} that for all $1 \leq i \leq d$ and for all $n\geq 2$, $\dE[\sigma_n^2(i)]$ drastically reduces to
\begin{align}
\label{CALCMSIGMANI23}
\dE[\sigma_n^2(i)] 
&=\frac{1}{a_n} \left(\frac{1}{d}+\frac{2q b}{b(b-a)} \left( \frac{a (b)^{(n)}}{b (a)^{(n)}} -1\right) \right)
=\frac{1}{a_n} \left(\frac{1}{d}+\frac{1}{d} \left( \frac{a (b)^{(n)}}{b (a)^{(n)}} -1\right) \right) \nonumber \\
&=\frac{1}{d a_n} \left(1+ \frac{a (b)^{(n)}}{b (a)^{(n)}} -1\right)= \frac{(a)^{(n)}}{ a d (n-1)!} 
\left(\frac{a (b)^{(n)}}{b (a)^{(n)}} \right)=\frac{(b)^{(n)}}{ b d (n-1)!}= \frac{1}{d b_n}.
\end{align}
One can observe from \eqref{DECSIGMAN2} and \eqref{CALCMSIGMANI23} that 
\begin{equation*}
\dE[\sigma_n^2]=\sum_{i=1}^d \dE[\sigma_n^2(i)]= \sum_{i=1}^d \frac{1}{d b_n}= \frac{1}{b_n},
\end{equation*}
which is consistent with \eqref{CALCMSIGMAN2}. Hereafter, we immediately obtain from
\eqref{GRAMDIAG} and \eqref{CALCMSIGMANI23} that for all $n \geq 1$,
\begin{equation}
\label{CALCMGRAM}
\dE[\Sigma_n]=\frac{1}{db_n}I_d= \frac{(b)^{(n)}}{ bd (n-1)!}I_d.
\end{equation}
Hence, it follows from the conjunction of \eqref{CALCSNSNT}, \eqref{CALCMSIGMAN2} and \eqref{CALCMGRAM} that
for all $n \geq 1$,
\begin{equation*}
 \dE[S_{n+1}S_{n+1}^T] =\left(1+\frac{2a}{n}\right) \dE[S_nS_n^T] +\frac{(b)^{(n)}}{ d n!}I_d,
\end{equation*}
which implies
\begin{align}
\label{CALCSNSNTFIN}
 \dE[S_{n}S_{n}^T] &= \frac{(2a)^{(n)}}{ 2a (n-1)!} 
 \left( \dE[X_1X_1^T]+ \frac{1}{d}\sum_{k=1}^{n-1}  \frac{(b)^{(k)}}{(2a+1)^{(k)}} I_d \right) \nonumber \\
 &= \frac{(2  a)^{(n)}}{ 2  a  (n-1)!} \left( \frac{1}{d}I_d 
 + \frac{b}{d(2a-b)} \left( 1- \frac{2a (b)^{(n)}}{b (2a)^{(n)}}\right) I_d\right) \nonumber \\
&=\frac{(2a)^{(n)}}{ d(2a-b) (n-1)!} 
\left( 1- \frac{(b)^{(n)}}{(2a)^{(n)}}\right)I_d.
\end{align}
Finally, as $2a>b$, we deduce from \eqref{LIMAN} \eqref{DEFL} and \eqref{CALCSNSNTFIN} that
\begin{align*}
\dE[L L^T] &= \lim_{n \rightarrow \infty} \frac{1}{\Gamma^2(a+1)}\dE[M_n M_n^T]
= \lim_{n \rightarrow \infty} \frac{a_n^2}{\Gamma^2(a+1)}\dE[S_n S_n^T] \\
&= \lim_{n \rightarrow \infty} \frac{1}{n^{2a}}\dE[S_n S_n^T]
= \lim_{n \rightarrow \infty} \frac{(2a)^{(n)}}{d(2a-b) n^{2a} (n-1)!}I_d\\
&=
\frac{1}{d(2a-b) \Gamma(2a)}I_d
\end{align*}
which is exactly \eqref{COVL} as $a=p-q$ and $b=1-r$, which implies that
$$
\frac{1}{d(2a-b)}=\frac{2d-1}{d(4dp-(2d+1)b)d}.
$$
\demend

\vspace{-2ex}
\subsection{Gaussian fluctuations.}
We conclude these appendices with the proof of the Gaussian fluctuations in the superdiffusive regime where $2a>b$. 

\ \vspace{-1ex}\\
\noindent{\bfseries Proof of Theorem \ref{T-AN-SR}.}
For all $n \geq 1$, denote
$$
\vspace{-1ex}
\Lambda_n=\sum_{k=n}^{\infty} \dE\big[\Delta M_{k+1} \Delta M_{k+1}^T |\cF_{k}]= 
\sum_{k=n}^{\infty}a_{k+1}^2\dE\big[\varepsilon_{k+1} \varepsilon_{k+1}^T |\cF_{k}].
$$
It follows from \eqref{CVAREPSILON} that for all $n\geq 1$,
\begin{equation}
\label{PANSR1}
\Lambda_n= a\sum_{k=n}^{\infty}  \left(\frac{a_{k+1}^2}{k}\right) \Sigma_k
+2q \sum_{k=n}^{\infty}  \left(\frac{a_{k+1}^2}{k}\right) \sigma_k^2 I_d
-a^2 \sum_{k=n}^{\infty}  \left(\frac{a_{k+1}}{k}\right)^2 S_k S_k^T.
\end{equation}
On the one hand, we obtain from the almost sure convergence \eqref{LIMGRAM} that
\begin{equation}
\label{PANSR2}
\lim_{n \rightarrow \infty} n^{2a-b}\sum_{k=n}^{\infty}\left(\frac{a_{k+1}^2}{k}\right) \Sigma_k= \frac{\Gamma^2(a+1)}{d(2a-b)}  \Sigma I_d
\hspace{1cm}\text{a.s.}
\end{equation}
By taking the trace on both sides of \eqref{PANSR2}, we also find that
\begin{equation}
\label{PANSR3}
\lim_{n \rightarrow \infty} n^{2a-b}
\sum_{k=n}^{\infty} \left(\frac{a_{k+1}^2}{k}\right) \sigma_k^2 
= \frac{\Gamma^2(a+1)}{(2a-b)}\Sigma  \hspace{1cm} \text{a.s.}  
\end{equation}
On the other hand, we deduce from the almost sure convergence \eqref{ASCVGSR1} that
\begin{equation}
\label{PANSR4}
\lim_{n \rightarrow \infty} n\sum_{k=n}^{\infty}\left(\frac{a_{k+1}}{k}\right)^2 S_k S_k^T= \Gamma^2(a+1)  L L^T
\hspace{1cm}\text{a.s.}
\end{equation}
Consequently, it comes from \eqref{PANSR1}, \eqref{PANSR2}, \eqref{PANSR3} and \eqref{PANSR4} that
\begin{equation}
\label{PANSR5}
\lim_{n \rightarrow \infty} n^{2a-b}\Lambda_n= \Gamma^2(a+1)\vartheta^2  \Sigma I_d
\hspace{1cm}\text{a.s.}
\end{equation}
where the asymptotic variance $\vartheta^2$, given by \eqref{VARSR}, is such that
$$
\vartheta^2=\frac{b}{d(2a-b)}=\frac{(2d-1)b}{d(4dp -(2d+1))}.
$$ 
Denote for any vector $u \in \dR^d$, $M_n(u)= \langle u, M_n \rangle$, 
$\varepsilon_n(u)= \langle u, \varepsilon_n \rangle$, $\Delta M_n(u)= a_n \varepsilon_n(u)$
and $\Lambda_n(u)=u^T\Lambda_n u$. It follows from the almost sure convergence \eqref{PANSR5} that
\begin{equation}
\label{PANSR6}
\lim_{n \rightarrow \infty} n^{2a-b}\Lambda_n(u)= \Gamma^2(a+1)\vartheta^2  \Sigma \| u \|^2
\hspace{1cm}\text{a.s.}
\end{equation}
In addition, we claim that for any vector $u \in \dR^d$ and
that for any $\eta>0$,
\begin{equation}
\lim_{n \rightarrow \infty}
n^{2a-b}\sum_{k=n}^{\infty}\dE\Big[\Delta M_k^2(u) \rI_{\big\{|\Delta M_k(u)|>\eta \sqrt{n^{b-2a}} \big \}}\Big]=0.
\label{PANSR7}
\end{equation}
As a matter of fact, we have from \eqref{UBMOM4EPS} and the Cauchy-Schwarz inequality that for any $\eta>0$,
\begin{eqnarray}
n^{2a-b}\sum_{k=n}^{\infty} \dE\Big[\Delta M_k^2(u) \rI_{\big\{|\Delta M_k(u)|>\eta \sqrt{n^{b-2a}} \big \}}\Big]
&\leq &
\frac{1}{\eta^2}n^{2(2a-b)}\sum_{k=n}^{\infty}\dE\Big[\Delta M_k^4(u)\Big], \notag\\
&\leq &
\frac{7\|u\|^4}{\eta^2\Gamma(b)} n^{2(2a-b)}\sum_{k=n}^{\infty} \frac{a_k^4}{k^{1-b}}.
\label{PANSR8}
\end{eqnarray}
However, one can easily see that
\begin{equation}
\lim_{n \rightarrow \infty} n^{4a-b}\sum_{k=n}^{\infty} \frac{a_k^4}{k^{1-b}}=\frac{\Gamma^4(a+1)}{4a-b}.
\label{PANSR9}
\end{equation}
Hence, \eqref{PANSR8} together with \eqref{PANSR9} immediately imply \eqref{PANSR7}.
Furthermore, let $(P_n(u))$ be the martingale defined, for all $n \geq 1$, by
$$
P_n(u)=\sum_{k=1}^n k^{2a-b}a_k^2 \Big(\varepsilon_k^2(u) - \dE[\varepsilon_k^2(u) | \cF_{k-1}]\Big).
$$
Its predictable quadratic variation is given by
$$
\langle P \rangle_n = \sum_{k=1}^n k^{2(2a-b)} a_k^4
\Big(\dE[\varepsilon_k^4(u)| \mathcal{F}_{k-1}]- \dE^2[\varepsilon_k^2(u) | \cF_{k-1}]\Big).
$$
Consequently, we deduce from \eqref{UBMOM4EPS} and the Cauchy-Schwarz inequality that $(P_n(u))$ is bounded in $\dL^2$ as
\begin{equation*}
\sup_{n \geq 1} \dE\big[\langle P(u) \rangle_n \big] < \infty.
\end{equation*}
Therefore, it follows from Doob's martingale convergence theorem \cite{Hall1980}
that $(P_n(u))$ converges a.s. Finally, all the conditions of Theorem 1 and Corollaries 1 and 2 in \cite{Heyde1977} are satisfied, which leads, for any vector $u \in \dR^d$, to
\begin{equation}
\label{PANSR10}
\frac{M_n(u)-M(u)}{\sqrt{\Lambda_n(u)}} 
\underset{n\rightarrow+\infty}{\overset{\cL}{\longrightarrow}}\cN(0, 1).
\end{equation}
Hence, as $M_n(u)=a_n \langle u, S_n\rangle$ and $M(u)=\Gamma(a+1)\langle u, L\rangle$, we find from  \eqref{LIMAN}, \eqref{LIMSIGMAN2}, \eqref{PANSR6}  and \eqref{PANSR10} that
\begin{equation}
\label{PANSR11}
\frac{\langle u, S_n -n^a L\rangle}{\sqrt{\sigma^2_n}} \underset{n\rightarrow+\infty}{\overset{\cL}{\longrightarrow}} \cN\big(0, \vartheta^2 \|u\|^2\big).
\end{equation}
Consequently, we deduce the Gaussian fluctuation \eqref{ANSR1} from \eqref{PANSR11} together with the Cram\'er-Wold theorem.
Moreover, we also obtain from Theorem 1 in \cite{Heyde1977} that for any vector $u \in \dR^d$,
\begin{equation}
\label{PANSR12}
\sqrt{n^{2a-b}} \big( M_n(u)-M(u) \big) \underset{n\rightarrow+\infty}{\overset{\cL}{\longrightarrow}} \Gamma(a+1) \sqrt{\Sigma^\prime}\cN\big(0, \vartheta^2 \|u\|^2\big)
\end{equation}
where $\Sigma^\prime$ is independent of the Gaussian distribution at the right-hand side of \eqref{PANSR12}
and $\Sigma^\prime$ has a $\cM\cL(b)$ distribution. Finally, it follows from \eqref{LIMAN} and \eqref{PANSR12} that
\begin{equation*}
\frac{\langle u, S_n -n^a L\rangle}{\sqrt{n^b}} \underset{n\rightarrow+\infty}{\overset{\cL}{\longrightarrow}} \sqrt{\Sigma^\prime}\cN\big(0, \vartheta^2 \|u\|^2\big)
\end{equation*}
which leads to \eqref{ANSR2} and completes the proof of Theorem \ref{T-AN-SR}.
\demend

\bibliographystyle{abbrv}
\bibliography{Biblio-MERWS}

\end{document}